\documentclass[a4paper,12pt]{article}

\usepackage{epsfig}
\usepackage{amsfonts,eucal}
\usepackage{amssymb,amsmath,amsthm,color}
\usepackage{authblk}
\usepackage[T2A]{fontenc}
\usepackage[cp1251]{inputenc}

\newtheorem{theorem}{Theorem}[section]
\newtheorem{proposition}{Proposition}[section]
\newtheorem{lemma}{Lemma}[section]

\newcommand{\R}{{\mathbb R}}

\newcommand{\X}{{\R^d}}

\newcommand{\eps}{\varepsilon}

\begin{document}

\title{Homogenization of parabolic problems for non-local convolution type operators under non-diffusive scaling of coefficients.}

\setcounter{footnote}3

\author[1,2,$\ast$]{A. Piatnitski
}
\author[1,2,$\star$]{E. Zhizhina}
\affil[1]{\small The Arctic University of Norway, Campus Narvik,

P.O.Box 385, 8505 Narvik, Norway}
\affil[2]{\small Higher School of Modern Mathematics, MIPT,

1 Klimentovskiy per., 115184 Moscow, Russia}
\affil[$\ast$]{ email: {\tt apiatnitski@gmail.com} }
\affil[$\star$]{email: {\tt elena.jijina@gmail.com}}
\date{}

\maketitle

{\parindent 6.6cm \it
 Dedicated to  Lars-Erik Persson,

a famous mathematician

 and an active skier}

\bigskip
\begin{abstract}
We study homogenization problem for non-autonomous  parabolic equations of the form $\partial_t u=L(t)u$ with an integral convolution type operator $L(t)$ that has
 a non-symmetric jump kernel which is periodic in spatial variables and in time.
 It is assumed that the space-time scaling of the environment is not diffusive.
We show that asymptotically the spatial and temporal evolutions of the solutions are getting decoupled,
and the homogenization result holds in a moving frame.

\end{abstract}

\bigskip\noindent
{\bf Keywords.}  Homogenization, convolution type operators, correctors, moving coordinates, non-diffusive scaling.

\section{Introduction and previous results}\label{Intro}

This paper deals with  homogenization problem for a parabolic type equation of the form
\begin{equation}\label{eq_eps}
\frac{\partial u^\varepsilon}{\partial t} \ = \ \frac{1}{\varepsilon^{d+2}} \int\limits_{\X} a\Big(\frac{x-y}{\eps}\Big) \ \mu \Big(\frac{x}{\eps}, \frac{y}{\eps}; \frac{t}{\varepsilon^\alpha} \Big)\ (u^\eps (y,t) - u^\eps (x,t)) dy,
\end{equation}
here $\varepsilon>0$ is a small parameter, and $0< \alpha<2$.  We assume that  $a(z)$ is a non-negative,
integrable function that has a finite second moment, while  the coefficient $ \mu (\xi, \eta; s)$ is periodic both in spatial variables $\xi$ and $\eta$ and in time $s$. It is also assumed that $\mu(\cdot)$ is  bounded and positive definite.
It should be emphasized that  we do not impose symmetry conditions in $\xi$ and $\eta$ on  $\mu(\xi,\eta,s)$ and $a(\xi-\eta)$. 

Our goal is to study 
the  asymptotic behaviour of solutions to the Cauchy problem for equation \eqref{eq_eps} with the initial
condition
\begin{equation}\label{ini_eps}
  u^\eps(x,0)=u_0(x),\quad u_0\in L^2(\mathbb R^d),
\end{equation}
as $\eps\to0$.
It turns out that asymptotically, as $\eps\to 0$,  the spatial evolution of solutions to the said Cauchy problem
and their temporal evolution are getting decoupled. More precisely, we show that there exists a function $b^\eps(t)$ with values in $\mathbb R^d$ such that in the moving frame
$$
(x,t)\,\to\, \Big(x - b^\eps (t),\,t\Big)=:({x}^\eps,t),
$$
the solution  $u^\eps({x}^\eps,t)$  converges as $\eps \to 0$  to a solution of the Cauchy problem
for a heat equation:
\begin{equation}\label{intr_eff}
\frac{\partial u}{\partial t} \ = \ \mathrm{div}\big(\Theta\nabla u\big),\quad  u(x,0)=u_0(x),
\end{equation}
with a constant positive definite matrix $\Theta$.

The function $b^\eps(t)$ is of particular interest. If the coefficient $\mu(\xi,\eta,s)$ is sufficiently regular in $s$,
then
\begin{equation}\label{b_eps_def}
b^\eps(t)=\sum\limits_{j=0}^{k(\alpha)}
\eps^{-1+j(2-\alpha)} b_{j}t+
\eps^{\alpha-1} B_0(\frac t{\eps^\alpha}), 
\end{equation}
where $k(\alpha)=\left[\frac1{2-\alpha}\right]$. Here $b_0,\,b_1,\ldots$ are vectors in $\mathbb R^d$,
$B_0(s)$ is a $1$-periodic continuously differentiable vector-function, and $[\cdot]$ stands for the integer part.

In the existing literature there is a number of homogenization results obtained in moving coordinates.
We quote here the works \cite{DP} and \cite{G}, where non-autonomous parabolic problems for second order elliptic differential operators
with large low order terms  were investigated in periodic media.
In \cite{DP} it was proved that, under the diffusive scaling,  the homogenization result holds in rapidly moving coordinates and, in the presence of zero order term, homogenization takes place after factorization of solutions with the ground state of the corresponding periodic  cell problem.

The paper \cite{G} focuses on  homogenization problems for convection-diffusion operators of the form
$L_\eps u(x,t)=\Delta u(x,t)+b\big(\frac t{\varepsilon^p},\frac x\varepsilon\big)\nabla u(x,t) $ with  $b(\xi,s)$
being  periodic both in spatial variables and time, and $0<p<2$, that is the scaling is not
diffusive.  The authors considers the case of a potential vector field $b(\xi,s)$:  $b(\xi,s)=\nabla_\xi U(\xi,s)$ with
periodic in $\xi$ and $s$ function  $U$ and proved by the probabilistic methods that the homogenization result holds in moving coordinates.

In the work \cite{ZhiPa16}, Sections 12, 13,  the authors consider convection-diffusion operators in periodic media.
They show that for the corresponding semigroups the homogenization result holds in moving coordinates and
obtain estimates for the rate of convergence in operator norms.

The case of non-stationary convection-diffusion equations with a periodic microstructure whose characteristics are
rapidly oscillating random stationary functions of time is addressed in \cite{KP}.

\medskip


\medskip
Recent years there is a growing interest in nonlocal convolution type equations in inhomogeneous environments.
The equations of this type appear in various problems of population biology,  material sciences and porous media.
When modeling nonlocal processes in environments with a microstructure, we face the problem of a macroscopic
description of such processes. This leads to homogenization problems for the corresponding equations.

Homogenization problems for parabolic convolution-type equations in periodic environments are considered in
 \cite{AA} and \cite{PZh2023}.

The paper  \cite{AA} deals with the Cauchy problems for an equation of the form \eqref{eq_eps} with the initial condition
$u_0\in L^2(\mathbb R^d)$ under the assumption that
$\mu(x,y)$ is periodic in $x$ and $y$, does not depend on time and is not necessary symmetric.
Under natural coerciveness and moment conditions, it is shown that the said Cauchy problem admits homogenization in moving coordinates:
$$
u^\eps(x,t)=u^0\Big(x-\frac b\eps t,t\Big)+ R^\eps(x,t), \quad\hbox{where }\
\lim\limits_{\eps\to0}\|R^\eps\|_{L^\infty(0,T;L^2(\mathbb R^d))}=0,
$$
and $u^0(x,t)$ is a solution of \eqref{intr_eff}
with the same initial condition $u_0$.

Notice that in the cited articles the evolution of the moving frame is linear in time.

\medskip
If the coefficients $\mu(\xi,\eta,s)$ and $a(\xi-\eta)$ in \eqref{eq_eps} are symmetric in $\xi$ and $\eta$, and the scaling is diffusive that is  $\alpha=2$,
then, as was shown in the recent work \cite{PZh2023}, a solution of \eqref{eq_eps}--\eqref{ini_eps} converges to
a solution  of homogenized problem \eqref{intr_eff} with a positive definite matrix $\Theta$. In this case
the usual homogenization result holds.

It should be emphasized that in this case 
the equation for the first corrector is parabolic, it reads
$$
\partial_s\chi(\xi,s)-\!\int\limits_{\mathbb T^d}a(\xi-\eta)\mu(\xi,\eta,s)\big(\chi(\xi,s)-\chi(\eta,s)\big)d\eta=\!
\int\limits_{\mathbb T^d}a(\xi-\eta)\mu(\xi,\eta,s)(\xi-\eta)d\eta.
$$
According to  \cite{PZh2023} this equation has a unique up to an additive constant periodic in $s$ solution. The function $\chi$
is then used for constructing the asymptotic expansion of a solution of the original equation and, after substitution in the equation,   allows to eliminate all the growing terms. Thus, we need only one first order corrector and one second order corrector in the asymptotic expansion.

Homogenization with non-diffusive scaling and nonlocal effects extends classical theory to systems where microscopic
interactions are long-ranged and scaling laws deviate from diffusive regimes.

In the present paper we consider nonlocal convolution type equations of the form \eqref{eq_eps} in the case of non-diffusive scaling when the oscillation in time is somehow slower than that in spatial variables. Also we do not assume the symmetry of the coefficient.
In this case, the homogenization  procedure has two interesting features.

Firstly, the homogenization result holds in a moving frame whose evolution is not linear in time,
it is the sum of  linear and  periodic functions.

Secondly, due to non-diffusive scaling in \eqref{eq_eps}, the auxiliary problem for the corrector is not parabolic
any more.  Instead, we freeze the value of time and solve the corresponding auxiliary elliptic equation for each value of time,
see equation \eqref{Fcorr} below. This gives us the first corrector.  However, this corrector is not sufficient for eliminating all the growing terms of the discrepancy. In order to fix this problem, we construct a finite series of first order correctors, the number of terms in this series depends on $\alpha$, it is growing as $\alpha$ approaches $2$. Unfortunately, this construction requires
some regularity of the coefficient $\mu(\xi,\eta,s)$ with respect to the variable $s$. The asymptotic  behaviour of solutions
for non-regular $\mu$ is an open problem.



The paper is organized as follows. In Section \ref{sec_setup} we provide the conditions on the coefficients
of the operator $A(s)$ and formulate our main results.

Section \ref{Proof-1} deals with auxiliary cell problems and correctors. First we introduce a formal asymptotic
expansion of the solution, substitute this expansion in the equation and collect power-like terms in the resulting
relation. This allows us to define auxiliary problems and to construct correctors and a moving frame.

In Section \ref{sec_proofttt} we obtain a priori estimates for the solutions of the original problems and prove the main results.


\section{Problem setup and main results}\label{sec_setup}

The present work deals with  homogenization of Cauchy problem  for non-autonomous  parabolic convolution-type
equations 
in periodic in space and time media under a non-diffusive scaling of the coefficients. This equation reads
\begin{equation}\label{ANA_eps}
\begin{array}{l}
\displaystyle
H^\varepsilon u (x,t) := \partial_t u (x,t) -  L^{\varepsilon}(t) u (x,t) = 0, 
\quad \mbox{where} \\[3mm]
\displaystyle
(L^\varepsilon(t) u)(x,t) \ = \ \frac{1}{\varepsilon^{d+2}} \int\limits_{\mathbb R^d} a\big(\frac{x-y}{\eps}\big) \mu\big(\frac{x}{\eps}, \frac{y}{\eps}; \frac{t}{\eps^\alpha}\big) (u(y,t) - u(x,t)) dy.
\end{array}
\end{equation}
Here $0<\alpha<2$, and $a(z)$ satisfies the following conditions:
\begin{equation}\label{M1}
  a(z)\geqslant 0, \quad \int_{\mathbb R^d}a(z)dz=1,\quad \int_{\mathbb R^d}|z|^2a(z)dz<\infty.
\end{equation}
We assume that $ \mu(x,y; t)$ is periodic in each of the variable $x$, $y$ with period $[0,1)^{d}$ and in $t$ with period $1$, and that
 there exist positive constants $\mu_-$ and $\mu_+$ such that
\begin{equation}\label{lm-random}
0<\mu_- \le  \mu (x,y; t) \le \mu_+ \quad \mbox{ for all } \; x,y, \in \mathbb{R}^d, \; t \in \mathbb{R}.
\end{equation}
Moreover, we assume that  $\mu (x,y; t)$ is a $k+1$ times differentiable in the variable $t$ function:
  \begin{equation}\label{k_reg_mu}
\mu (x,y; t) \in C^{k+1}(\mathbb T^1; L^\infty(\mathbb T^{2d}))
\end{equation}
with $k=k(\alpha)= \big[ \frac{1}{2-\alpha} \big]$.

Denote by $u^{\varepsilon}(x,t)$ a solution of the Cauchy problem
\begin{equation}\label{th-2}
\partial_t u^\eps (x,t) = L^{\varepsilon}(t) u^\eps (x,t), \quad u^\eps(x,0) = u_0(x), \quad u_0 \in L^2(\mathbb R^d),
\end{equation}
$t \in [0, T]$, where the operator $ L^{\varepsilon}$ is defined in  \eqref{ANA_eps}.
Since by the Schur lemma   the norm of the operator $ L^{\varepsilon}(t)$ satisfies the estimate
$ \|L^{\varepsilon}(t)\|_{L^2(\mathbb R^d)\to L^2(\mathbb R^d)}\leqslant 2\eps^{-2}\mu_+\|a\|_{L^1(\mathbb R^d)}$,
see \cite[Theorem 5.2]{Halmos}, then for each $\eps>0$ problem  \eqref{th-2} has a unique solution $u^\eps\in L^\infty(0,T; L^2(\mathbb R^d))$.
The main result of the present work is the following statement:

\begin{theorem}\label{MT}
Let $a(z)$ satisfy all the conditions in \eqref{M1},
and assume that $\mu (x,y,t)$ is periodic in each of the variables  $x$, $y$ with period $[0,1)^{d}$ and in $t$
with period $1$. Assume moreover that  $\mu (x,y,t)$
satisfies conditions \eqref{lm-random}, \eqref{k_reg_mu}.
Then there exist a positive definite symmetric matrix $\Theta$ and a family of functions
$b^\eps (t) \in C([0,+\infty); \mathbb R^d)$ (see
Sections \ref{sec3_1} and \ref{sss41} for their definitions)  such that for any $T>0$
\begin{equation}\label{th-4-I}
\| u^{\varepsilon} ( x + b^{\eps}(t) ,\,t ) -
u^0 (x, t)\|_{L^{\infty}((0,T),\ L^2(\mathbb R^d) )}  \to 0, \quad \mbox{as } \; \varepsilon \to 0,
\end{equation}
where $u^0(x,t)$ is a solution of the Cauchy problem
\begin{equation}\label{th-3}
\frac{\partial u}{\partial t} = \mathrm{div}\big(\Theta \nabla u\big), \quad u(x,0) = u_0(x), \quad u_0 \in L^2(\mathbb R^d), \quad t \in [0, T].
\end{equation}
The profile of the function $b^\eps(t)$ depends on whether $0<\alpha<1$, or $\alpha=1$, or $1<\alpha<2$.
If $0<\alpha<1$, then 
$$
b^\eps(t)=\frac {b_0}\eps t+\eps^{\alpha-1}B_0\Big(\frac t{\eps^\alpha}\Big),
$$
where $b_0\in\mathbb R^d$ is a constant vector, and $B_0(s)$ is a continuously differentiable $1$-periodic vector-function. \\
If $1<\alpha<2$, then
$$
b^\eps(t)=\eps^{-1} {b_0}t + \sum\limits_{j=1}^k \eps^{(-1+j(2-\alpha))}b_j t,
$$
where $k=\Big[\frac1{2-\alpha}\Big]$, and $b_j\in\mathbb R^d$, $j=0,1,\ldots,k$.\\
If $\alpha=1$, then
$$
b^\eps(t)=\frac {b_0}\eps t+b_1t+B_0\Big(\frac t{\eps}\Big).
$$
\end{theorem}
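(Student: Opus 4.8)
The plan is to justify a multiscale asymptotic expansion of $u^\eps$ and then to control the remainder by a weighted energy estimate carried out in the moving frame. Throughout I use the fast variables $\xi=x/\eps$, $\eta=y/\eps$ and the fast time $s=t/\eps^\alpha$, and write $\mathcal L(s)$ for the cell operator
$$(\mathcal L(s)\phi)(\xi)=\int_{\R^d}a(\xi-\eta)\,\mu(\xi,\eta;s)\,\big(\phi(\eta)-\phi(\xi)\big)\,d\eta$$
acting on $1$-periodic functions of $\xi$. First I would apply $L^\eps(t)$ to a smooth profile $w(x-b^\eps(t),t)$ and, instead of changing variables in a way that shifts $\mu$, Taylor expand only the increment $w(y-b^\eps)-w(x-b^\eps)$; setting $y=x+\eps z$ this gives
$$\big(L^\eps(t)\,w(\cdot-b^\eps,t)\big)(x)=\tfrac1\eps\, V\big(\tfrac x\eps,s\big)\cdot\nabla w+\tfrac12\,A\big(\tfrac x\eps,s\big):D^2w+\dots,$$
with $V(\xi,s)=\int a(z)\mu(\xi,\xi+z;s)\,z\,dz$ and $A(\xi,s)=\int a(z)\mu(\xi,\xi+z;s)\,z\otimes z\,dz$ both $1$-periodic in $\xi$, so that the frame enters only through the slow argument $x-b^\eps$ while the operator keeps its genuine fast variable $x/\eps$. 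The drift $V$ is of order $\eps^{-1}$, the diffusion $A$ of order $\eps^{0}$, and the higher moments of $a$ give controllable remainders by \eqref{M1}.

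Next I would postulate the expansion
$$u^\eps(x,t)=u^0(\hat x,t)+\eps\,\chi_1\big(\tfrac x\eps,s\big)\cdot\nabla u^0(\hat x,t)+\text{(higher correctors)},\qquad \hat x:=x-b^\eps(t),$$
substitute it into $H^\eps u=0$, and collect powers of $\eps$. The order $\eps^{-2}$ relation forces the leading profile to lie in the kernel of $\mathcal L(s)$, i.e. to be independent of $\xi$. At order $\eps^{-1}$ one obtains, \emph{for each frozen} $s$, the elliptic cell problem \eqref{Fcorr} for $\chi_1(\cdot,s)$; since $\alpha<2$ the fast time derivative $\partial_s$ does not appear at this order, which is exactly why the problem is elliptic rather than parabolic. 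Its Fredholm solvability requires the right-hand side to be orthogonal to the invariant density of the adjoint cell operator, and this condition singles out the effective velocity $\bar b(s)$ of the frame. Splitting $\bar b(s)=b_0+(\bar b(s)-b_0)$ into its $s$-average and its zero-mean oscillation and integrating $\dot b^\eps(t)=\eps^{-1}\bar b(t/\eps^\alpha)$ gives, by the elementary computation $\int_0^t(\bar b(\tau/\eps^\alpha)-b_0)\,d\tau=\eps^\alpha B_0(t/\eps^\alpha)$ with $B_0$ the $1$-periodic antiderivative, the leading terms $\eps^{-1}b_0 t+\eps^{\alpha-1}B_0(t/\eps^\alpha)$ of $b^\eps$.

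The crux, and the source of the case distinction, is that one corrector does not cancel all the growing terms: each subsequent cell problem carries a right-hand side containing $\partial_s$ of the preceding corrector, so I would build a finite chain $\chi_1,\dots,\chi_{k}$ of first order correctors, each solving an elliptic problem of the type \eqref{Fcorr} with an updated right-hand side, which is precisely where the regularity $\mu\in C^{k+1}$ of \eqref{k_reg_mu} is consumed. Since $\partial_s$ contributes a factor $\eps^{-\alpha}$ and inverting $\mathcal L$ a factor $\eps^{2}$, the solvability condition at step $j$ produces a drift contribution of size $\eps^{-1+j(2-\alpha)}$, which is still growing precisely while $j(2-\alpha)<1$, i.e. for $j\le k=[1/(2-\alpha)]$; these are the linear-in-$t$ terms $\eps^{-1+j(2-\alpha)}b_j t$ that must be absorbed into $b^\eps$. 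For $1<\alpha<2$ these dominate and $\eps^{\alpha-1}\to0$ makes the oscillation negligible, so it is dropped into the remainder; for $0<\alpha<1$ one has $k=0$ and no extra linear terms, but $\eps^{\alpha-1}\to\infty$ forces the oscillation $\eps^{\alpha-1}B_0(t/\eps^\alpha)$ to be kept; $\alpha=1$ is the borderline case where an $O(1)$ periodic term $B_0(t/\eps)$ and one linear term $b_1t$ both survive, matching the three profiles in the statement. Collecting the order $\eps^{0}$ balance and imposing its solvability yields the homogenized equation \eqref{th-3}, with $\Theta$ the $(\xi,s)$-average of $\tfrac12 A$ corrected by $\chi_1$; taking $\Theta$ symmetric, its positive definiteness follows from the lower bound $\mu\ge\mu_-$ in \eqref{lm-random} together with the nondegeneracy of the second moment of $a$ in \eqref{M1}.

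Finally I would turn the formal expansion into convergence. Passing to the frame $v^\eps(x,t):=u^\eps(x+b^\eps(t),t)$ turns \eqref{th-2} into $\partial_t v^\eps=\widetilde L^\eps(t)v^\eps+\dot b^\eps\cdot\nabla v^\eps$, where $\widetilde L^\eps$ is $L^\eps$ with coefficient $\mu$ evaluated at the shifted fast argument $\tfrac x\eps+\theta^\eps(t)$, $\theta^\eps(t):=b^\eps(t)/\eps\ \mathrm{mod}\ 1$; by construction of $b^\eps$ the $O(\eps^{-1})$ drift of $\widetilde L^\eps$ is cancelled by $\dot b^\eps\cdot\nabla$, and the chain of correctors cancels the successive growing contributions down to the truncation order. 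Subtracting the truncated expansion, the remainder $R^\eps$ then solves a problem whose discrepancy is $o(1)$ in $L^\infty(0,T;L^2(\R^d))$, and \eqref{th-4-I} follows from $\|R^\eps\|_{L^\infty(0,T;L^2)}\to0$ together with the $O(\eps)$ smallness of the correctors. The main obstacle is exactly this a priori estimate: the symmetric part of the kernel gives a coercive Dirichlet form (bounded below through $\mu_-$), but the antisymmetric part contributes a term which in the energy identity is \emph{a priori} of order $\eps^{-2}$; one has to exploit that its fast-variable coefficient has zero mean over the period (so that it is a rapid divergence) and that the frame velocity $\dot b^\eps$ removes its leading part, and to track the torus-phase $\theta^\eps(t)$ carefully, before a Gronwall argument against the coercive form closes the estimate uniformly on $[0,T]$.
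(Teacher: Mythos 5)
Your formal construction (frozen-time elliptic cell problems of type \eqref{Fcorr}, Fredholm solvability against the invariant density $p$, the chain of correctors whose solvability conditions produce the drifts $\eps^{-1+j(2-\alpha)}b_jt$, and the three profiles of $b^\eps$) is essentially the paper's Section \ref{Proof-1}, and that part is sound. The proof breaks down, however, at exactly the point you yourself call ``the main obstacle''. Testing the remainder equation with the remainder itself (in the original variables or in the moving frame --- the transport term $\dot b^\eps\cdot\nabla v^\eps$ has identically vanishing quadratic form, so the frame velocity cannot cancel anything in the energy identity), the non-symmetric part of the kernel produces the term
\begin{equation*}
\frac1{2\eps^{2}}\int_{\R^d} G\Big(\frac x\eps,\frac t{\eps^\alpha}\Big)\,|v^\eps(x,t)|^2\,dx,
\qquad
G(\xi,s)=\int_{\R^d}a(z)\big[\mu(\xi+z,\xi,s)-\mu(\xi,\xi-z,s)\big]\,dz,
\end{equation*}
i.e.\ $G=A^\star(s)\mathtt{1}$. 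This $G$ does have zero Lebesgue mean, so it is a rapid divergence, but that observation does not close the estimate: writing $\eps^{-2}G(x/\eps)=\eps^{-1}\mathrm{div}_x\Phi(x/\eps)$ and integrating by parts leaves a term of size $\eps^{-1}\|v^\eps\|\,\|\nabla v^\eps\|$, and (i) the nonlocal Dirichlet form $\eps^{-d-2}\iint a\,\mu\,(v^\eps(x)-v^\eps(y))^2\,dx\,dy$ does \emph{not} control $\|\nabla v^\eps\|_{L^2}$, and (ii) even if it did, Young's inequality would leave $C\eps^{-2}\|v^\eps\|^2$ and Gronwall would give a bound of order $e^{CT/\eps^2}$, which is useless. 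Note also that zero Lebesgue mean is not the solvability condition for the cell operator: the range of $A(s)$ is the orthogonal complement of $p(\cdot,s)$, and $\int_{\mathbb T^d}G\,p\,d\xi=\int_{\mathbb T^d}A(s)p\,d\xi\neq0$ in general, so $G$ cannot be absorbed by a nonlocal corrector either. The paper's essential device, absent from your proposal, is Proposition \ref{prop_apriori}: one multiplies the equation by $q^\eps(x/\eps,t/\eps^2)\,\Xi^\eps$, where $q^\eps$ solves the \emph{backward adjoint} problem \eqref{aux_qq}, so that $q^\eps(x/\eps,t/\eps^2)$ solves $-\partial_t v=L^\eps(t)^\star v$; then all dangerous terms cancel identically, leaving a weighted energy identity with a nonnegative Dirichlet form, and the uniform two-sided bounds on $q^\eps$ (Lemma \ref{l_check_q_bound}) convert it into \eqref{xixi_est}. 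Some such weight (a factorization by an invariant density) is indispensable for a non-symmetric kernel; no Gronwall argument on the unweighted energy will work.

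The second genuine gap is the passage to the time average. The solvability condition at order $\eps^0$ determines the matrix $\theta(s)$ of \eqref{II-6}, which still oscillates in fast time; what the ansatz can cancel exactly is $\partial_t u=\theta(t/\eps^\alpha):\nabla\nabla u$, \emph{not} \eqref{th-3}. If, as in your last paragraph, you insert $u^0$ (the solution with the constant matrix $\Theta$) directly into the expansion, the discrepancy contains $\big(\Theta-\theta(t/\eps^\alpha)\big):\nabla\nabla u^0$, which is $O(1)$ and merely oscillating, so your claim that the remainder solves a problem with $o(1)$ data is false as stated. The paper instead inserts the solution $\rho^\eps$ of the intermediate problem \eqref{apriori-1} (so the order-$\eps^0$ terms vanish exactly) and proves separately, in Lemma \ref{l_closedness}, that $\rho^\eps\to u^0$ (by compactness; alternatively, since $\theta$ is $x$-independent, by Fourier transform using $\int_0^t(\theta(\sigma/\eps^\alpha)-\Theta)\,d\sigma=O(\eps^\alpha)$ uniformly). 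The seemingly obvious fix --- adding a time corrector $\eps^\alpha K(t/\eps^\alpha):\nabla\nabla u^0$ with $K'=\Theta-\theta$ --- is not free: the frame velocity $\dot b^\eps\sim\eps^{-1}$ and the drift part of $L^\eps$ hit this term and generate contributions of order $\eps^{\alpha-1}$, which grow for $\alpha<1$ and would require yet further correctors. Two smaller points: your frame-conjugated equation contains $\dot b^\eps\cdot\nabla v^\eps$, but $L^2$-solutions of the nonlocal equation possess no gradient, so this equation should be avoided altogether (the paper keeps the moving frame inside the smooth ansatz and never differentiates $u^\eps$); and the corrector chain must run to $\chi_{k+1}$, not $\chi_k$, so that the leftover term $\eps^{\gamma_{k+1}-\alpha}\partial_s\chi_{k+1}$ carries a positive power of $\eps$ and can be relegated to the remainder.
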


\section{ Auxiliary problems and moving frame}\label{Proof-1}

\subsection{Ansatz for solution}\label{sec3_1}

In this section we construct an approximation for a solution  $u^\varepsilon$ of problem \eqref{th-2}. Denote by
$\mathcal{S}(\mathbb{R}^d)$  the Schwartz class of functions in $\mathbb R^d$. In what follows the symbol $:$ stands
for the inner product of matrices and higher order tensors, while $\cdot$ and $\otimes$ denote the inner product of vectors
in $\mathbb R^d$ and the tensor product, respectively.
For a given $u \in C^1((0,T),{\cal{S}}(\mathbb R^d))$  we introduce the following ansatz:
\begin{equation}\label{w_eps}
\begin{array}{l}
\displaystyle
w^{\varepsilon}(x,t)  =  u ( x^\varepsilon\!,  t )
+ \big[ \varepsilon \chi_1 (\frac{x}{\varepsilon}, \frac{t}{\varepsilon^\alpha}) +   \varepsilon^{\gamma_2} \chi_2 (\frac{x}{\varepsilon}, \frac{t}{\varepsilon^\alpha})+ \ldots
\\[3mm] \displaystyle
+ \varepsilon^{\gamma_k} \chi_k (\frac{x}{\varepsilon}, \frac{t}{\varepsilon^\alpha}) + \varepsilon^{\gamma_{k+1}} \chi_{k+1} (\frac{x}{\varepsilon}, \frac{t}{\varepsilon^\alpha}) \big]\!\cdot\! \nabla u  ( x^\varepsilon\!,  t )
+ \varepsilon^2 \varkappa (\frac{x}{\varepsilon}, \frac{t}{\varepsilon^\alpha}):\!\nabla \nabla u  ( x^\varepsilon\!,t ),
\end{array}
\end{equation}
where
$$
\gamma_j = 1 +(j-1)(2-\alpha), \; j = 1, \ldots, k+1, \quad k = k(\alpha) = \Big[ \frac{1}{2-\alpha} \Big].
$$
Another ingredient that we need to construct an approximate solution is  moving coordinates defined by
\begin{equation}\label{G}
x^\varepsilon = x^\varepsilon(t) = x -  b^\varepsilon(t),  \quad
b^\varepsilon(t)= b_0^\eps(t)+ b_1^\eps(t) + \ldots + b^\eps_{k}(t), \quad b^\eps(t) \in \mathbb{R}^d.
\end{equation}
The fact that the function $b^\eps(t)$ is defined as the sum in (\ref{G}) is a consequence of representation (\ref{w_eps}) for the ansatz $w^\eps$.

If  $0 < \alpha <1$, then $k=k(\alpha)=0$, and the formula for $b^\varepsilon(t)$ in \eqref{G} reads
$b^\eps (t) = b^\eps_0(t)$. We will prove that
\begin{equation}\label{bw-1}
b^\eps (t) = b^\eps_0(t)= \frac{b_0}{\eps}t + \eps^{\alpha-1}B_0 \Big(\frac t{\eps^\alpha}\Big),
\end{equation}
where $B_0(s)$ is a bounded periodic Lipschitz continuous function. In this case
 the ansatz $w^\eps$ takes the following form:
\begin{equation}\label{w_eps1}
w^{\varepsilon}(x,t)  =  u ( x^\varepsilon\!,  t )
+  \varepsilon \chi_1 (\frac{x}{\varepsilon}, \frac{t}{\varepsilon^\alpha}) \!\cdot\! \nabla u  ( x^\varepsilon\!,  t ) + \varepsilon^2 \varkappa (\frac{x}{\varepsilon}, \frac{t}{\varepsilon^\alpha}):\!\nabla \nabla u( x^\varepsilon\!,t ).
\end{equation}

If $1 \le \alpha <2$, then $k>0$ and, in addition to $ b^\eps_0(t)$, the
sum $ b^\eps_0(t)+ b^\eps_1(t)+\ldots+ b^\eps_k(t)$ has at least one more term. We will show that
\begin{equation}\label{bw-2}
b_{j}^\eps(t) = \eps^{\gamma_{j}- \alpha}b_j t + o(1), \ j=1, \ldots, k;
\end{equation}
here $b_j \in \mathbb{R}^d, \ j=1, \ldots, k,$ are constant vectors that are defined below.

The correctors
$ \chi_j(\xi, s) = \{ \chi^i_j(\xi,s), i = 1, \ldots, d \}, \ j=1, \ldots, k+1,$ are vector functions from $L^\infty\big(\mathbb T^1; (L^2(\mathbb T^d))^d\big)$, and $\varkappa(\xi, s) = \{ \varkappa^{ij}(\xi,s), i,j = 1, \ldots, d\} \in
L^\infty\big(\mathbb T^1; (L^2(\mathbb T^d))^{d^2}\big)$.

Our goal is to construct the correctors $\chi_1, \ldots, \chi_{k+1}$, $\varkappa$ and the moving frame 
$(x,t)\to (x-b^\eps(t),t)$ in such a way
 that
\begin{equation}\label{ansatz_rough}
H^{\varepsilon} w^{\varepsilon}(x,t) = \partial_t w^{\varepsilon}(x,t)-\!(L^\eps(t) w^{\varepsilon})(x,t)\approx \partial_t u(x^\eps,t)-\mathrm{div} \big(\Theta \nabla u\big)(x^\eps,t) .
\end{equation}

In what follows for brevity we use the notation
$\partial_s v(x,\frac t{\eps^\alpha})=\partial_s v(x,s)\big|_{s=\frac t{\eps^\alpha}}$.
Substituting $w_\eps$ for $u$ in \eqref{ANA_eps} and using (\ref{G}) we have
\begin{equation}\label{Aw}
H^{\varepsilon} w^{\varepsilon}(x,t) \
= \partial_t w^{\varepsilon}(x, t) - L^{\varepsilon}(t) w^{\varepsilon}(x,t)
\end{equation}
with
\begin{equation}\label{Aw-1}
\begin{array}{l}
\displaystyle
\partial_t w^{\varepsilon}(x, t)\! =
\partial_t u (x^\varepsilon\!\!, t)+\! \Big[\!- \frac{d b^\eps(t)}{dt}\!
+\! \sum_{j=1}^{k+1} \varepsilon^{\gamma_j-\alpha} \partial_s \chi_j (\frac{x}{\varepsilon},\frac t{\eps^\alpha})
\Big]\! \cdot\! \nabla u (x^\varepsilon\!\!, t)
 \\[4mm] \displaystyle
 +
 \Big[\sum_{j=1}^{k+1} \eps^{\gamma_j} \chi_j (\frac{x}{\varepsilon}, \frac{t}{\varepsilon^\alpha})
\Big]\!\cdot\! \Big(
- \frac{d \, b^\varepsilon(t)}{dt}
\nabla \nabla u (x^\varepsilon, t)
 + \partial_t \nabla u  ( x^\varepsilon\!,  t ) \Big)
 \\[5mm] \displaystyle
 +  \varepsilon^{2-\alpha} \partial_s \varkappa (\frac{x}{\varepsilon},\frac{t}{\varepsilon^\alpha})    :
\nabla \nabla u (x^\varepsilon, t)
\\[4mm] \displaystyle
+ \varepsilon^2 \varkappa \big(\frac{x}{\varepsilon}, \frac{t}{\varepsilon^\alpha}\big) \! : \!
\Big(\! - \!\frac{d \, b^\varepsilon(t)}{dt} \, \nabla \nabla \nabla u (x^\varepsilon\!, t)\! + \! \partial_t \nabla \nabla  u (x^\varepsilon\!, t)\Big)
\\[4mm] \displaystyle
= \partial_t u (x^\varepsilon, t) + \Big[ - \frac{d  b^\eps(t)}{dt}   +
+\sum_{j=1}^{k}\varepsilon^{\gamma_j-\alpha} \partial_s \chi_j (\frac{x}{\varepsilon}, \frac{t}{\varepsilon^\alpha})
 \Big] \cdot \nabla u (x^\varepsilon, t)
 \\[4mm] \displaystyle
 - \frac{d \, b^\varepsilon(t)}{dt} \otimes
 \varepsilon \chi_1 (\frac{x}{\varepsilon}, \frac{t}{\varepsilon^\alpha})
\! : \!
\nabla \nabla u (x^\varepsilon, t) + \phi_1^\eps(x,t).
 \end{array}
 \end{equation}
 Here
 \begin{equation}\label{restphi-1}
\begin{array}{l}
\displaystyle
  \phi_1^\eps(x,t) =
  \varepsilon^{\gamma_{k+1}-\alpha} \partial_s \chi_{k+1} (\frac{x}{\varepsilon}, \frac{t}{\varepsilon^\alpha}) \cdot \nabla u (x^\varepsilon, t)
  \\[4mm] \displaystyle
 - \frac{d  b^\varepsilon(t)}{dt} \otimes
 \Big[ \sum_{j=2}^{k+1}\varepsilon^{\gamma_j} \chi_j (\frac{x}{\varepsilon},\frac{t}{\varepsilon^\alpha})
\Big]\! : \!
\nabla \nabla u (x^\varepsilon, t)
 \\[4mm] \displaystyle
+ \Big[ \sum_{j=1}^{k+1}\varepsilon^{\gamma_j} \chi_j (\frac{x}{\varepsilon}, \frac{t}{\varepsilon^\alpha})
\Big] \cdot
 \partial_t \nabla u  ( x^\varepsilon\!,  t )
  + \varepsilon^{2-\alpha} \partial_s \varkappa (\frac{x}{\varepsilon},\frac{t}{\varepsilon^\alpha})  \! : \!
\nabla \nabla u (x^\varepsilon, t)
\\[4mm] \displaystyle
+ \varepsilon^2 \varkappa \big(\frac{x}{\varepsilon}, \frac{t}{\varepsilon^\alpha}\big) \! : \!
\Big(\! - \!\frac{d \, b^\varepsilon(t)}{dt} \, \nabla \nabla \nabla u (x^\varepsilon\!, t)\! + \! \partial_t \nabla \nabla  u (x^\varepsilon\!, t)\Big).
 \end{array}
 \end{equation}

After change of variables $z = \frac{x-y}{\varepsilon} = \frac{x^\varepsilon - y^\varepsilon}{\varepsilon}$ and $\xi = \frac{x}{\varepsilon}$,  $\xi \in \mathbb{T}^d$, we obtain
$$
(L^{\eps}(t) w^{\varepsilon})(x,t) =  \frac{1}{\varepsilon^{2}} \int\limits_{\mathbb R^d} a(z) \mu \big(\frac{x}{\eps}, \frac{x}{\eps}-z, \frac{t}{\eps^\alpha}\big) (w^{\varepsilon}(x-\eps z,t) - w^{\varepsilon}(x,t)) dz=
$$
\begin{equation}\label{Aw-2}
\begin{array}{l}
\displaystyle
 =
\frac{1}{\varepsilon^{2}} \int\limits_{\mathbb R^d} a(z) \mu \big( \frac{x}{\varepsilon}, \frac{x}{\varepsilon}-z, \frac{t}{\eps^\alpha} \big)
\bigg\{ (u(x^\varepsilon-\eps z,t) - u(x^\varepsilon,t))
\\[3mm] \displaystyle
+  \Big[\sum_{j=1}^{k+1}\eps^{\gamma_j} \chi_j(\frac{x}{\varepsilon}-z, \frac{t}{\eps^\alpha})
\Big]\!
\cdot\! \nabla u(x^\varepsilon-\eps z,t)
-  \Big[\sum_{j=1}^{k+1}\eps^{\gamma_j} \chi_j(\frac{x}{\varepsilon}, \frac{t}{\eps^\alpha})
 \Big]\!
\cdot\! \nabla u(x^\varepsilon,t)
\\[3mm] \displaystyle +\
\varepsilon^2 \varkappa (\frac{x}{\varepsilon}-z, \frac{t}{\eps^\alpha}) : \nabla \nabla u(x^\varepsilon-\eps z,t) -
 \varepsilon^2 \varkappa (\frac{x}{\varepsilon}, \frac{t}{\eps^\alpha}) : \nabla \nabla u(x^\varepsilon,t) \bigg\} \ dz
 \\[3mm] \displaystyle
 =\bigg\{ \frac{1}{\varepsilon} \int\limits_{\mathbb R^d} a(z) \mu \big(\xi, \xi-z, \frac{t}{\eps^\alpha} \big)
\big\{ -z + \chi_1(\xi-z, \frac{t}{\eps^\alpha})- \chi_1(\xi, \frac{t}{\eps^\alpha}) \big\} dz
\\[3mm] \displaystyle
+ \sum_{j=2}^{k+1} \varepsilon^{\gamma_j - 2}\!\! \int\limits_{\mathbb R^d} a(z) \mu \big(\xi, \xi\!-\!z, \frac{t}{\eps^\alpha} \big)
\big(\chi_j(\xi-z, \frac{t}{\eps^\alpha})- \chi_j(\xi, \frac{t}{\eps^\alpha})\big) dz
\bigg\}\!
\cdot\! \nabla u(x^\eps,t)
\\[3mm] \displaystyle +\ \int\limits_{\mathbb R^d} a(z) \mu \big(\xi, \xi-z, \frac{t}{\eps^\alpha} \big) \Big\{ \frac12 z \otimes z - \chi_1(\xi -z,  \frac{t}{\eps^\alpha} ) \otimes z
\\[3mm] \displaystyle
+ \big( \varkappa (\xi-z, \frac{t}{\eps^\alpha})
 - \varkappa (\xi, \frac{t}{\eps^\alpha})\big) \Big\} \ dz : \nabla \nabla u(x^\eps,t)\ + \ \phi^\varepsilon_2(x,t)
\end{array}
\end{equation}
with the remainder
\begin{equation}\label{14}
\begin{array}{l}
\displaystyle
\phi^\varepsilon_2 (x,t)  =   \int\limits_{\mathbb R^d}
\bigg\{ \int\limits_0^{1} \big( \nabla \nabla u(x^\varepsilon-\varepsilon zq, t) - \nabla \nabla u(x^\varepsilon,t) \big) \! : \! z\!\otimes\!z \,(1-q) \ dq
\\[4mm]   \displaystyle
+\, \varepsilon \chi_1 \big(\xi\!-\!z, \frac{t}{\eps^\alpha} \big)\!\cdot\! \int\limits_0^{1}\!  \nabla \nabla \nabla u(x^\varepsilon\!-\!\varepsilon zq, t) z\!\otimes\!z (1\!-\!q) \, dq
\\[3mm]  \displaystyle +
\Big[ \sum_{j=2}^{k+1}\varepsilon^{\gamma_j -1} \chi_j (\xi\!-\!z, \frac{t}{\varepsilon^\alpha})
\Big] \cdot
\int\limits_0^{1}\!  \nabla \nabla  u(x^\varepsilon\!-\!\varepsilon zq, t)\, z \, dq
\\[4mm]   \displaystyle
- \, \varepsilon \varkappa \big(\xi\!-\!z, \frac{t}{\eps^\alpha} \big) \! : \! \int\limits_0^{1}\!  \nabla \nabla \nabla u(x^\varepsilon\!-\!\varepsilon zq, t) \, z  \, dq\!  \bigg\} \, a (z) \mu \big( \xi, \xi\! -\!z; \frac{t}{\eps^\alpha} \big) \, dz.
\end{array}
\end{equation}

In \eqref{Aw-1} and \eqref{Aw-2} we collected the terms which formally do not  vanish, as $\eps\to0$;  all other terms
form the functions  $\phi^\varepsilon_1$ and  $\phi^\varepsilon_2$.
In the following sections we construct 
 the correctors $\chi_j(\xi,s), \, j=1, \ldots, k+1$, and $\varkappa(\xi,s)$ as well as the vector-function $b^\eps(t)$.
 Then we prove in Lemma \ref{phi} that the $L^\infty((0,T);L^2(\mathbb R^d))$ norm of the remainders $\phi^\varepsilon_1(x,t)$ and $\phi^\varepsilon_2(x,t)$  vanishes, as $\eps\to0$, for  any $u \in C^{1}\big( (0,T), {\cal{S}}(\mathbb R^d) \big)$.
%

\medskip
We begin by collecting in \eqref{Aw-1} and \eqref{Aw-2} power-like terms with non-positive power of $\eps$ and,
for each such power, equating the sum of these terms to 0.

\subsection{Terms of order $\varepsilon^{-1}$} \label{SS-1}

Collecting the terms of order $\varepsilon^{-1}$ in \eqref{Aw-1}, \eqref{Aw-2},
we derive from \eqref{Aw} the following problem for the first corrector $\chi_1 (\xi, s), \xi  \in \mathbb{T}^d, s
\in \mathbb{R}_+$:
\begin{equation}\label{Fcorr}
\int\limits_{\mathbb R^d}  a (z) \mu( \xi, \xi -z; \frac{t}{\eps^\alpha}) \Big( -z + \chi_1 (\xi-z, \frac{t}{\eps^\alpha}) - \chi_1 (\xi, \frac{t}{\eps^\alpha}) \Big) \, dz + \eps \, \frac{d b^\eps_0(t)}{dt} = 0
 \end{equation}
written in the vector form. Since \eqref{Fcorr} is a system of uncoupled equations,
we can consider separately the equation for each component $\chi^i_1(\xi,s), \, i= 1, \ldots, d$, that reads
\begin{equation}\label{Fcorr-1}
 A(s) \chi^i_1 (\xi, s) = f^i( \xi, s)- F_1^i(s), \quad  s=\frac{t}{\eps^\alpha}, \quad\chi_1^i(\cdot,s)\in L^2(\mathbb T^d)
\end{equation}
with
\begin{equation}\label{Acorr-1}
A(s) v (\xi, s) = \int\limits_{\mathbb R^d}  a (z) \mu( \xi, \xi -z; s)( v (\xi-z, s) - v (\xi, s) )  \, dz,
 \end{equation}
\begin{equation}\label{fcorr-1}
f^i (\xi, s) =   \int\limits_{\mathbb R^d}  z^i \, a (z) \, \mu( \xi, \xi -z; s) \, dz, 
 \end{equation}
 here $s$ is a parameter. Since $\chi^j_1(\cdot,s)$, $j=1,\ldots,d$,  is uniquely defined up to an additive constant, we impose
 the condition
 $$
 \int_{\mathbb T^d}  \chi^j_1(\xi,s)d\xi =0
 $$
 in order to make $\chi^j_1(\cdot,s)$ uniquely defined.
  According to Proposition 4.3 in \cite{AA}, the operator $A(s)$ is Fredholm for each $s\in \mathbb R$ and the kernels
  of $A(s)$ and $A^{*}(s)$  have dimension $1$.
  Therefore,
the solvability condition for  \eqref{Fcorr}--\eqref{fcorr-1}
reads
\begin{equation}\label{compat_condi}
 \int\limits_{\mathbb T^d}\big(f^i( \xi, s)- F_1^i(s)\big)p(\xi,s)d\xi=0, \ \ \hbox{for any }s\in\mathbb R,
\end{equation}
where $p(\xi,s) \in  L^2(\mathbb{T}^d)$ is a unique
 solution of the equation
\begin{equation}\label{aux_p_adj}
A^\star (s) p(\cdot, s) = 0, \qquad \int_{\mathbb{T}^d} p(\xi, s)\, d\xi =1,
\end{equation}
with
$$
A^\star (s) p(\xi, s)=\int_{\mathbb R^d}a(\eta-\xi)\mu(\eta,\xi,s) p(\eta,s)d\eta-
 \Big(\int_{\mathbb R^d}a(\xi-\eta)\mu(\xi,\eta,s)d\eta\Big) p(\xi,s).
$$
As was shown in \cite[Corollary 4.1]{AA}, $p(\xi,s)$ satisfies the estimate
\begin{equation}\label{estimo_p}
0<\pi_1\leqslant p(\cdot,s)\leqslant \pi_2,
\end{equation}
where $\pi_1$ and $\pi_2$ do not depend on $s$.
Condition \eqref{compat_condi} implies the following choice of the vector function $F_1(s)$:
\begin{equation}\label{SCb}
F_1(s) = \int\limits_{\mathbb R^d} \int\limits_{\mathbb T^d} z \, a (z) \, \mu( \xi, \xi -z; s) \, p(\xi,s) \, d\xi dz.
\end{equation}
On the other hand, equation \eqref{Fcorr} yields
\begin{equation*}\label{b1_bbbis}
\frac{d b^\eps_0(t)}{dt} = \frac{1}{\eps} \, F_1(\frac{t}{\eps^\alpha})
\end{equation*}
with $F_1(s)$ defined by (\ref{SCb}). Let us notice that the function $F_1(s)$ is periodic with period 1. Denote
\begin{equation}\label{b1-0}
b_0 = \int_0^1 F_1(s) ds, \quad F_1(s) = b_0 + \beta_0(s), \; \mbox{where } \; \int_0^1 \beta_0(s) ds = 0.
\end{equation}
Then
\begin{equation}\label{b1}
\frac{d b^\eps_0(t)}{dt} = \frac{1}{\eps} \, \big( b_0 + \beta_0( \frac{t}{\eps^\alpha}) \big),
\end{equation}
and a solution of \eqref{b1} takes the form
\begin{equation}\label{b1-bis}
b_0^\eps(t) = \frac{b_0}{\eps} \, t + \eps^{\alpha-1} \int_0^{t/\eps^\alpha} \beta_0(\tau) d\tau = \frac{b_0}{\eps} \, t + \eps^{\alpha-1} B_0\Big(\frac t{\eps^\alpha}\Big),
\end{equation}
where, due to \eqref{b1-0},
$$
B_0(s) =  \int_0^{s} \beta_0(\tau) d\tau =  \int_{[s]}^{s} \beta_0(\tau) d\tau
$$
is a bounded $1$-periodic Lipschitz continuous function.
Notice that, if $1 < \alpha < 2$, then the second term in \eqref{b1-bis} is vanishing as $\eps \to 0$.  Consequently,
in this case $b_0^\eps(t) =  \frac{b_0}{\eps} \, t + o(1)$.

Since the first corrector $\chi_1$ is defined as a solution of  \eqref{Fcorr} with
$$
b_0^\eps(t) =  \frac{b_0}{\eps} \, t + \eps^{\alpha-1} B_0\big(\frac t{\eps^\alpha}\big),
$$
the sum of the terms of order $\varepsilon^{-1}$  in \eqref{Aw-1} - \eqref{Aw-2} is equal to 0.

Since  the right-hand side of  relation \eqref{Aw-1} depends on the time derivative of the corrector $\chi_1$,
 we should check that $\chi_1$ is a differentiable in time function. In fact, $\chi_1$ inherits the regularity of
 $\mu$ with respect to time.
 \begin{lemma}\label{l_t_deriv}
 Under assumptions \eqref{M1}--\eqref{k_reg_mu} we have $p(\xi,s)\in C^{k+1}(\mathbb T^1;L^2(\mathbb T^d))$,
 $\chi_1(\xi,s)\in C^{k+1}(\mathbb T^1;L^2(\mathbb T^d))$ and $F_1(s)\in C^{k+1}(\mathbb T^1)$.
 \end{lemma}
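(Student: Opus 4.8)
The plan is to reduce all three regularity claims to the smoothness of operator inversion, after first checking that the building blocks depend $C^{k+1}$-smoothly on $s$ at the operator level. By the Schur estimate used above for $L^\varepsilon(t)$, for each fixed $s$ the operators $A(s)$ (see \eqref{Acorr-1}) and $A^\star(s)$ are bounded on $L^2(\mathbb T^d)$ with norm at most $2\mu_+\|a\|_{L^1(\mathbb R^d)}$. Applying the same estimate to the difference quotients of $\mu$ and invoking \eqref{k_reg_mu}, I would conclude that $s\mapsto A(s)$ and $s\mapsto A^\star(s)$ are $C^{k+1}$ maps into the space $\mathcal L(L^2(\mathbb T^d))$ of bounded operators, their $j$-th derivatives being the operators with kernels $a(z)\,\partial_s^j\mu$. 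Since $a$ has a finite first moment, $\int_{\mathbb R^d}|z|a(z)\,dz\le(\int a)^{1/2}(\int|z|^2a)^{1/2}<\infty$ by Cauchy--Schwarz and \eqref{M1}, the identical argument shows that $f^i(\cdot,s)$, defined in \eqref{fcorr-1}, lies in $C^{k+1}(\mathbb T^1;L^2(\mathbb T^d))$.

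Next I would realize $p$ as the image of a fixed function under the inverse of an invertible ``bordered'' operator. Write $\langle v\rangle:=\int_{\mathbb T^d}v\,d\xi$, let $\mathbf 1$ be the constant function (so $\langle\mathbf 1\rangle=1$), let $P_0v:=\langle v\rangle\mathbf 1$ be the associated rank-one operator, and set $B(s):=A^\star(s)+P_0$. Integrating and using $A(s)\mathbf 1=0$ gives $\langle A^\star(s)v\rangle=\langle v,A(s)\mathbf 1\rangle=0$; combined with $\ker A^\star(s)=\mathrm{span}\{p(\cdot,s)\}$ from Proposition 4.3 of \cite{AA}, this yields injectivity of $B(s)$, and since $P_0$ is finite-rank, $B(s)$ is Fredholm of index $0$, hence invertible. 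A direct verification then shows $p(\cdot,s)=B(s)^{-1}\mathbf 1$: pairing $B(s)p=\mathbf 1$ with $\langle\cdot\rangle$ forces $\langle p\rangle=1$, and substituting back gives $A^\star(s)p=0$. Because $s\mapsto B(s)$ is $C^{k+1}$ and the inversion map $B\mapsto B^{-1}$ is smooth on the open set of invertible operators, composition gives $p\in C^{k+1}(\mathbb T^1;L^2(\mathbb T^d))$.

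With $p$ available, formula \eqref{SCb} presents $F_1(s)$ as a $z$-integral (convergent by the finite first moment of $a$) of the product $\mu(\cdot,\cdot;s)\,p(\cdot,s)$; as a product of two $C^{k+1}$ factors this yields $F_1\in C^{k+1}(\mathbb T^1)$. Finally I would border $A(s)$ in the mirror way, $\tilde B(s):=A(s)+P_0$, whose invertibility follows from $\langle\mathbf 1,p\rangle=\langle p\rangle=1\neq0$ together with $\ker A(s)=\mathrm{span}\{\mathbf 1\}$. Since the compatibility condition \eqref{compat_condi} places $f^i-F_1^i$ in the range of $A(s)$, pairing $\tilde B(s)\chi=f^i-F_1^i$ with $p$ forces $\langle\chi\rangle=0$ and hence $A(s)\chi=f^i-F_1^i$, so $\chi_1^i(\cdot,s)=\tilde B(s)^{-1}\big(f^i(\cdot,s)-F_1^i(s)\big)$; the same composition argument gives $\chi_1\in C^{k+1}(\mathbb T^1;L^2(\mathbb T^d))$. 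The ordering $p\to F_1\to\chi_1$ is essential, and this is precisely why I border with the fixed function $\mathbf 1$ rather than with $p$, so that the construction of $p$ carries no circular dependence. The main obstacle I anticipate is the very first step --- extracting genuine $C^{k+1}$ operator-norm dependence, with the claimed derivative operators, from the merely $L^\infty(\mathbb T^{2d})$-valued regularity \eqref{k_reg_mu} of $\mu$ --- but this is settled by feeding difference quotients of $\mu$ into the Schur estimate.
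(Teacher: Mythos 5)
Your proof is correct, but it follows a genuinely different route from the paper's. The paper also starts from the operator-norm regularity $A^\star(s)\in C^{k+1}(\mathbb T^1;\mathcal L(L^2(\mathbb T^d)))$, but then obtains $p$ via spectral theory: since $0$ is an isolated simple eigenvalue of $A^\star(s)$ and $s\mapsto A^\star(s)$ is norm-continuous on the compact $\mathbb T^1$, there is a uniform spectral gap $2\varkappa$, and the Riesz contour integral $\mathcal P_p(s)=-\frac1{2\pi i}\int_{\Gamma_\varkappa}(A^\star(s)-\zeta)^{-1}d\zeta=(\cdot,\mathbf 1)\,p(\cdot,s)$ defines a $C^{k+1}$ family of projectors, whence $p(\cdot,s)=\mathcal P_p(s)\mathbf 1$ inherits the regularity; for $\chi_1$ the paper conjugates, setting $\check A(s)=p\,A(s)\,p^{-1}$ (using the two-sided bounds \eqref{estimo_p} on $p$), shows $\check A(s)$ is invertible on the mean-zero subspace, and writes $\chi_1=p^{-1}\check A(s)^{-1}p\,(f-F_1)$. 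Your rank-one bordering $B(s)=A^\star(s)+P_0$, $\tilde B(s)=A(s)+P_0$ replaces both the contour-integral construction and the weighted conjugation: invertibility follows from the Fredholm alternative plus the injectivity computations you give, and then $p=B(s)^{-1}\mathbf 1$, $\chi_1^i=\tilde B(s)^{-1}(f^i-F_1^i)$ reduce everything to smoothness of operator inversion, exactly as in the paper's final step. What your version buys: no spectral-gap/uniformity argument is needed (openness of the set of invertible operators gives it for free), estimate \eqref{estimo_p} is not used at all, and your formula for $\chi_1$ automatically satisfies the normalization $\int_{\mathbb T^d}\chi_1\,d\xi=0$ imposed in Section \ref{SS-1} (the paper's formula produces a particular solution that need not be mean-zero, though this is harmless since subtracting the mean is a $C^{k+1}$ operation). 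What the paper's version buys: the projector $\mathcal P_p(s)$ exhibits the rank-one spectral structure explicitly and is the tool that generalizes when the relevant eigenvalue moves with $s$ rather than being pinned at $0$. Both proofs rest on the same inputs from \cite{AA} (Fredholmness, one-dimensional kernels of $A(s)$ and $A^\star(s)$) and on the same first step, which you justify in more detail than the paper does.
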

\begin{proof}
According to Lemma 4.1 in \cite{AA}, for each $s\in\mathbb T^1$ the operator
$A^*(s):L^2(\mathbb T^d)\mapsto L^2(\mathbb T^d)$ has an isolated simple eigenvalue at zero. Since under our standing
assumptions the family $A^*(s)$ is continuous in $s$ in the operator norm, there exists $\varkappa>0$ such that
$\mathrm{dist}(0, \sigma(A^*(s)\setminus\{0\}))\geqslant 2\varkappa$ for all $s\in\mathbb T^1$. Denote by $\Gamma_\varkappa$ the contour $\{\zeta\in\mathbb C\,:\, |\zeta|=\varkappa\}$.

As a consequence of  \eqref{M1}--\eqref{k_reg_mu} we have $A^*(s)\in
C^{k+1}(\mathbb T^1\,;\,\mathcal{L}(L^2(\mathbb T^d), L^2(\mathbb T^d)))$.
Then for all $\zeta$ from a $\frac\varkappa4$-neighbourhood of $\Gamma_\varkappa$ the resolvent $(A^*(t))-\zeta)^{-1}$
as a function of $s$ also belongs to $C^{k+1}(\mathbb T^1\,;\,\mathcal{L}(L^2(\mathbb T^d), L^2(\mathbb T^d)))$.
Therefore, by the Riesz formula, the projector
\begin{equation}\label{proje}
\mathcal{P}_p(s)=-\frac1{2\pi i}\int_{\Gamma_\varkappa}(A^*(s))-\zeta)^{-1}\,d\zeta= \big(\cdot, \mathtt{1}\big)p(\cdot,s)
\end{equation}
is a $C^{k+1}$ operator family in $L^2(\mathbb T^d)$; here $p(\cdot,s)$ is a function introduced in \eqref{aux_p_adj},
and $\mathtt{1}$ is the function identically equal to $1$.
We recall that $\int_{\mathbb T^d}p(\xi,s)d\xi=1$. Substituting in \eqref{proje} $\mathtt{1}$ for a test function we conclude that $p(\cdot,s)\in C^{k+1}(\mathbb T^1; L^2(\mathbb T^d))$. This yields the first and, in view of \eqref{SCb}, the third statements of the lemma.

Letting $\check A(s)=p(\cdot,s)A(s)(p(\cdot,s))^{-1}$ and considering estimate \eqref{estimo_p},  one can easily check that $\check A(s)$ maps
$\mathop{L}\limits^\circ{}^{\!2}(\mathbb T^d)$ into itself, where  $\mathop{L}\limits^\circ{}^{\!2}(\mathbb T^d)=
\{u\in L^2(\mathbb T^d)\,:\, \int_{\mathbb T^d}u(\xi)d\xi=0\}$.
Moreover, $\check A(s)$ is invertible on this space, and $\check A(s)\in C^{k+1}(\mathbb T^1;
\mathcal{L}(\mathop{L}\limits^\circ{}^{\!2}(\mathbb T^d),\mathop{L}\limits^\circ{}^{\!2}(\mathbb T^d)))$.
Recalling the definition of the function $f(\cdot,s)$ in \eqref{fcorr-1} we conclude that
$\chi_1(\cdot,s)=(p(\cdot,s))^{-1}(\check A(s))^{-1}p(\cdot,s)( f(\cdot,s)-F_1(s))\in C^{k+1}(\mathbb T^1;L^2(\mathbb T^d))$.
\end{proof}

\subsection{Terms of order $\varepsilon^{-\delta}, \ 0 \le \delta<1$} 

The first corrector $ \chi_1$ has been constructed in such a way that the sum of the terms of order $\eps^{-1}$
in \eqref{Aw-2} vanishes.
However, the expression for the time derivative of $w^\eps$ in \eqref{Aw-1} contains the term
$\varepsilon^{1-\alpha} \partial_s \chi_1 \big(\frac{x}{\varepsilon},\frac{t}{\varepsilon^\alpha}\big)  \nabla u (x^\varepsilon, t)$, which is of order $\varepsilon^{1-\alpha}$.
%
If $0<\alpha<1$, then this term is vanishing as $\eps \to 0$ and we don't need additional correctors, except for $\varkappa$.
In this case  the  ansatz $w^\eps$ and the function $b^\eps(t) = b^\eps_0(t)$ are defined by \eqref{w_eps1} and \eqref{bw-1}, respectively,

If $1 \le \alpha <2$, then $1-\alpha \le 0$ and we have to construct higher order correctors to compensate this term and to make the sum of  terms of order $\eps^{1-\alpha}$ in \eqref{Aw-1} - \eqref{Aw-2} equal to zero. This leads to the following problem for the second corrector $\chi_2 (\xi, s)$: 
\begin{equation}\label{chi2}
\begin{array}{l}
\displaystyle
\eps^{\gamma_2-2} \int\limits_{\mathbb R^d}  a (z) \mu( \xi, \xi -z; \frac{t}{\eps^\alpha}) \Big( \chi_2 (\xi-z, \frac{t}{\eps^\alpha}) - \chi_2 (\xi, \frac{t}{\eps^\alpha}) \Big) dz
\\[3mm] \displaystyle
=   \eps^{1-\alpha} \partial_s \chi_1(\xi, \frac{t}{\eps^\alpha}) - \frac{d b^\eps_1(t)}{dt},
\end{array}
 \end{equation}
 or equivalently,
\begin{equation}\label{chi2-1}
 A\Big(\frac{t}{\eps^\alpha}\Big) \chi_2\Big(\xi, \frac{t}{\eps^\alpha}\Big) = \eps^{3- \alpha - \gamma_2}\partial_s \chi_1\Big(\xi, \frac{t}{\eps^\alpha}\Big) -  \eps^{2-\gamma_2}\frac{d b^\eps_1(t)}{dt}.
\end{equation}
Consequently, $\gamma_2 = 3-\alpha$, and repeating the reasoning from the previous subsection we conclude that the solvability condition of equation  \eqref{chi2-1} 
reads
\begin{equation}\label{SCb2}
\eps^{\alpha - 1}\frac{d b^\eps_1(t)}{dt} = \int\limits_{\mathbb T^d} \partial_s \chi_1\big(\xi, \frac{t}{\eps^\alpha}\big) \,
p\big(\xi, \frac{t}{\eps^\alpha}\big) \, d\xi,
\end{equation}
where $p(\xi,s)$ is defined in \eqref{aux_p_adj}. 
Denote by $F_2(s)$ the function on the right-hand side of equation \eqref{SCb2}:
$$
F_2(s) = \int\limits_{\mathbb T^d} \partial_s \chi_1(\xi, s) \, p(\xi, s) \, d\xi.
$$
Then $F_2(s)$ is a periodic function, and we define
$$
b_1 = \int_0^1 F_2(s) ds, \quad F_2(s) = b_1 + \beta_1(s), \; \mbox{where } \; \int_0^1 \beta_1(s) ds = 0.
$$
Then equation \eqref{SCb2} takes the form
\begin{equation}\label{b1-t}
\frac{d b^\eps_1(t)}{dt} = \eps^{1- \alpha} \Big( b_1 + \beta_1(\frac{t}{\eps^\alpha}) \Big),
\end{equation}
and we infer that
\begin{equation}\label{b1-bbis}
b_1^\eps(t) = \eps^{1-\alpha} b_1 t + O(\eps).
\end{equation}
With this definition of the second corrector $\chi_2$,
the sum of  terms of order $\varepsilon^{1- \alpha}$  in \eqref{Aw-1}, \eqref{Aw-2} is equal to 0.

Following the line of the proof of Lemma \ref{l_t_deriv} one can show that
$\chi_2(\cdot,s)\in C^k(\mathbb T^1;L^2(\mathbb T^d))$.

At the next step we deal with the terms of order $\eps^{\gamma_2-\alpha}$.  If $\gamma_2-\alpha<0$, then we should
compensate the term $\varepsilon^{\gamma_2-\alpha} \partial_s \chi_2 (\xi, \frac{t}{\varepsilon^\alpha})$ with the help
of the third corrector $\chi_3(\xi, s)$.   We leave the details to the reader. Notice that
$\chi_3(\cdot,s)\in C^{k-1}(\mathbb T^1;L^2(\mathbb T^d))$.

We iterate this procedure until $\gamma_{k+1}-\alpha > 0$. 
This yields
$$
k = \Big[\frac{1}{2-\alpha}  \Big], \quad \mbox{ and } \; \gamma_k = 1+ (k-1)(2-\alpha).
$$
If $\alpha = 2-\frac1k$ for some $k\in\mathbb N$, 
then $\gamma_k=\alpha$, and
$$
\varepsilon^{\gamma_k-\alpha} \partial_s \chi_k \Big(\xi, \frac{t}{\varepsilon^\alpha}\Big) = \partial_s \chi_k \Big(\xi, \frac{t}{\varepsilon^\alpha}\Big). 
$$
In this case $\gamma_{k+1}=2$, and the last term on the right hand side of \eqref{G} takes the form
$$
b_{k}^\eps(t) = b_{k} t + O(\eps^\alpha),
$$
where
$$
b_{k} = \int_0^1 F_{k+1}(s) ds, \quad F_{k+1}(s) = \int\limits_{\mathbb T^d} \partial_s \chi_k(\xi, s) \, p(\xi, s) \, d\xi.
$$
So in this section we constructed the correctors $\chi_j$, $j=1,\ldots, k+1$, and justified
formula  \eqref{b_eps_def}  for $b^\eps(t)$. Our next goal is to define the corrector $\varkappa$.

\section{Proof of Theorem \ref{MT}.}\label{sec_proofttt}

\subsection{Terms of order $\varepsilon^0$}\label{sss41}

It was shown in the previous section that, under a proper choice of periodic correctors
$\chi_1, \chi_2, \ldots, \chi_{k+1} $
in \eqref{w_eps}, the sum of the  terms corresponding to each negative power of $\eps$ 
 as well as the sum of the terms $ \partial_s \chi_k (\xi, s)$ appearing for exceptional values of $\alpha =  2-\frac1k, \; k=1,2,\ldots,$ in representations \eqref{Aw-1}, \eqref{Aw-2} vanishes.

Next we collect all the terms of order $\varepsilon^{0}$  in \eqref{Aw-1}, \eqref{Aw-2}:
\begin{equation}\label{II-1}
\begin{array}{l}
\displaystyle
\partial_t u (x - b^\varepsilon(t), t) \!-\!
\Big( \!\int\limits_{\mathbb R^d}\! a (z) \mu (\xi, \xi\! -\!z; \frac{t}{\varepsilon^\alpha} ) \big( \frac12 z\!\otimes\!z\! - z \!\otimes\!\chi_1 (\xi\!-\!z, \frac{t}{\varepsilon^\alpha})\big)
   dz  \\[3mm]
\displaystyle
+ \eps \chi_1(\xi,\frac{t}{\varepsilon^\alpha} ) \!\otimes\! \frac{d b^\eps (t)}{dt} + A(\frac{t}{\varepsilon^\alpha}) \varkappa (\xi, \frac{t}{\varepsilon^\alpha})
\Big)\! : \! \nabla \nabla u (x - b^\varepsilon(t), t) = 0,
\end{array}
\end{equation}
where the operator $ A(s)$  is defined in \eqref{Acorr-1}. It is worth noting that, according to \eqref{b1},
$$
\begin{array}{l}
\displaystyle
\eps \chi_1\Big(\xi,\frac{t}{\varepsilon^\alpha} \Big) \otimes \frac{d b^\eps (t)}{dt} = \eps \chi_1\Big(\xi,\frac{t}{\varepsilon^\alpha} \Big) \otimes \frac{d b^\eps_0 (t)}{dt}+
O(\eps^{2-\alpha})
\\[3mm]
\displaystyle
=  \chi_1\Big(\xi,\frac{t}{\varepsilon^\alpha} \Big) \otimes \Big(b_0+ \beta_0\big(\frac{t}{\varepsilon^\alpha}\big)\Big) + O(\eps^{2-\alpha}).
\end{array}
$$
Let us define a matrix $\mathrm{\theta}(s) = \big\{ \mathrm{\theta}^{ij} (s) \big\}$ using the solvability condition for the following equation: 
 \begin{equation}\label{II-6bis}
 \begin{array}{l} \displaystyle
 A(s) \varkappa (\xi, s) = \mathrm{\theta}(s)
 - \chi_1\Big(\xi,\frac{t}{\varepsilon^\alpha} \Big) \!\otimes\!  (b_0+ \beta_0(s))
 \\[3mm]
 \displaystyle
 - \int\limits_{\mathbb R^d}\! a (z) \mu (\xi, \xi\! -\!z; s)
 \big( \frac12 z\!\otimes\!z\! - z \!\otimes\!\chi_1 (\xi\!-\!z,s)\big)
  \, dz
\end{array}
\end{equation}
The solvability condition of \eqref{II-6bis} reads
 \begin{equation}\label{II-6}
 \begin{array}{l}
\displaystyle
\mathrm{\theta}(s) =  \int\limits_{\mathbb T^d} \chi_1(\xi,s) p(\xi, s) d \xi  \otimes  (b_0+ \beta_0(s))
\\[3mm]
\displaystyle
+ \int\limits_{\mathbb R^d} \int\limits_{\mathbb T^d} a (z) \mu (\xi, \xi\! -\!z;s) \big( \frac12 z\!\otimes\!z\! - z \!\otimes\!\chi_1 (\xi\!-\!z, s)\big) p(\xi, s)\, dz\, d\xi.
\end{array}
\end{equation}
Due to conditions \eqref{M1}--\eqref{k_reg_mu} and Lemma \ref{l_t_deriv}, the function $\theta(s)$ is  periodic and $k+1$ times continuously differentiable.
With this choice of $\mathrm{\theta}(s)$ equation \eqref{II-6bis} is solvable.
Inserting  its solution $\varkappa$ in 
\eqref{II-1}  we obtain the following "intermediate" effective equation
 \begin{equation}\label{II-9}
 \begin{array}{rl}
 \displaystyle
 \partial_t w^\eps(x,t)- L^\eps w^\eps (x,t)\! =&\!\!\!
\partial_t u (x - b^\varepsilon(t), t) \! -\! \mathrm{\theta}(\frac{t}{\varepsilon^\alpha}) :  \nabla \nabla u (x - b^\varepsilon(t), t)\\[2mm]
+&\!\! \phi^\eps(x,t),\qquad  \phi^\eps(x,t)=\phi_1^\eps(x,t)+\phi_2^\eps(x,t).
\end{array}
\end{equation}

\begin{lemma}\label{l_posidef}
  The matrix $\theta(s)$ is periodic in time variable, bounded  and positive definite, i.e. there exists a constant $\lambda>0$ such that
   $\lambda|\zeta|^2 \leqslant \theta(s)\zeta\cdot\zeta\leqslant \lambda^{-1}|\zeta|^2$
   for all $s\in\mathbb R$  and $\zeta\in\mathbb R^d$.
\end{lemma}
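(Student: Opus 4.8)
The plan is to reduce the whole statement to a single explicit \emph{energy representation} of the quadratic form $\theta(s)\,\zeta\cdot\zeta$, from which boundedness, periodicity, and two-sided positive definiteness all follow at once. Periodicity in $s$ is immediate, since $\mu(\cdot,\cdot;s)$, $p(\cdot,s)$, $\chi_1(\cdot,s)$ and $\beta_0(s)$ are all $1$-periodic (Lemma~\ref{l_t_deriv}), and hence so is every factor in \eqref{II-6}. Boundedness of $\theta(s)$ uniformly in $s$ follows from $\mu\le\mu_+$, the bound $p\le\pi_2$ in \eqref{estimo_p}, the finite second moment of $a$, and the uniform estimate $\sup_s\|\chi_1(\cdot,s)\|_{L^2(\mathbb{T}^d)}<\infty$. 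The substance is therefore the coercivity bound.

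Fix $s$ and $\zeta\in\mathbb{R}^d$, and write $c(\xi)=\zeta\cdot\chi_1(\xi,s)$, $\Phi(\xi)=c(\xi)+\zeta\cdot\xi$, and $K(\xi,\eta)=a(\xi-\eta)\mu(\xi,\eta;s)$. Contracting \eqref{Fcorr-1} with $\zeta$ gives $A(s)c=\zeta\cdot f-\zeta\cdot F_1$, and since $\zeta\cdot f(\xi,s)=-A(s)(\zeta\cdot\xi)$ by \eqref{fcorr-1}, the corrected linear profile obeys $A(s)\Phi=-\zeta\cdot F_1(s)$, a constant in $\xi$; equivalently $\int_{\mathbb{R}^d}K(\xi,\eta)\big(\Phi(\eta)-\Phi(\xi)\big)\,d\eta=-\zeta\cdot F_1$. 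I would then contract \eqref{II-6} with $\zeta\otimes\zeta$, pass to the variable $\eta=\xi-z$, and substitute $\zeta\cdot z=\big(c(\eta)-c(\xi)\big)-\big(\Phi(\eta)-\Phi(\xi)\big)$. Using the corrector identity above, the drift contribution $\big(\int_{\mathbb{T}^d}c\,p\big)(\zeta\cdot F_1)$ cancels, and after collecting the remaining terms one is left, with $\delta c:=c(\eta)-c(\xi)$, with
\[
\theta(s)\,\zeta\cdot\zeta=\tfrac12\!\int_{\mathbb{T}^d}\!\!\int_{\mathbb{R}^d}\!K\big(\Phi(\eta)-\Phi(\xi)\big)^2 p\,d\eta\,d\xi-\int_{\mathbb{T}^d}\!\!\int_{\mathbb{R}^d}\!K\Big(c(\xi)\,\delta c+\tfrac12(\delta c)^2\Big)p\,d\eta\,d\xi .
\]
The pointwise identity $c(\xi)\,\delta c+\tfrac12(\delta c)^2=\tfrac12\big(c(\eta)^2-c(\xi)^2\big)$ turns the last double integral into $\tfrac12\int_{\mathbb{T}^d}\big(A(s)c^2\big)(\xi)\,p(\xi,s)\,d\xi$, which vanishes because $A^*(s)p=0$ by \eqref{aux_p_adj}. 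This leaves the manifestly nonnegative representation
\[
\theta(s)\,\zeta\cdot\zeta=\tfrac12\int_{\mathbb{T}^d}\!\int_{\mathbb{R}^d}a(z)\,\mu(\xi,\xi-z;s)\big(\zeta\cdot\chi_1(\xi-z,s)-\zeta\cdot\chi_1(\xi,s)-\zeta\cdot z\big)^2\,p(\xi,s)\,dz\,d\xi .
\]

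From this formula the lower bound is routine: using $\mu\ge\mu_-$ and $p\ge\pi_1$ it suffices to bound below $\int_{\mathbb{T}^d}\!\int_{\mathbb{R}^d}a(z)\big(c(\xi-z)-c(\xi)-\zeta\cdot z\big)^2 dz\,d\xi$. Since $c$ is $1$-periodic, $\int_{\mathbb{T}^d}\big(c(\xi-z)-c(\xi)\big)d\xi=0$ for every $z$, so the cross term drops and the inner $\xi$-integral is at least $(\zeta\cdot z)^2$; integrating against $a$ gives $\theta(s)\,\zeta\cdot\zeta\ge\tfrac{\mu_-\pi_1}{2}\,\zeta^\top M\zeta$ with $M=\int_{\mathbb{R}^d}z\otimes z\,a(z)\,dz$. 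The desired bound $\ge\lambda|\zeta|^2$ then follows from the non-degeneracy of the second moment matrix $M$, which is part of the standing coercivity assumptions on $a$. The upper bound $\le\lambda^{-1}|\zeta|^2$ comes from $\mu\le\mu_+$, $p\le\pi_2$, the elementary inequality $(A+B)^2\le 2A^2+2B^2$, and the uniform $L^2(\mathbb{T}^d)$ bound on $\chi_1(\cdot,s)$.

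The \emph{main obstacle} is the derivation of the energy identity itself: one must carefully track all first-order corrector and drift terms produced by the non-symmetric kernel and verify that exactly the right combination cancels. The decisive cancellation is $\int_{\mathbb{T}^d}\big(A(s)c^2\big)p\,d\xi=0$, which is nothing but the invariance $A^*(s)p=0$ tested against $c^2$; this is precisely what reduces the a priori non-symmetric expression \eqref{II-6} to a weighted Dirichlet energy of the corrected linear profile $\Phi=\zeta\cdot\chi_1+\zeta\cdot\xi$, and hence to a positive definite form.
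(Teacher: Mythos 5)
Your proof is correct and takes essentially the same route as the paper, whose own ``proof'' is a deferral to Proposition 5.1 of \cite{AA}: that argument rests on exactly the weighted Dirichlet-form representation you derive, namely $\theta(s)\zeta\cdot\zeta=\tfrac12\int_{\mathbb T^d}\int_{\mathbb R^d}a(z)\mu(\xi,\xi-z;s)\big(\zeta\cdot\chi_1(\xi-z,s)-\zeta\cdot\chi_1(\xi,s)-\zeta\cdot z\big)^2p(\xi,s)\,dz\,d\xi$, obtained from the corrector equation \eqref{Fcorr-1} and the key cancellation $\int_{\mathbb T^d}\big(A(s)c^2\big)p\,d\xi=0$ coming from $A^\star(s)p=0$, followed by the same lower bound (cross term killed by periodicity of $\chi_1$, reduction to the second-moment matrix) and upper bound (Cauchy--Schwarz with \eqref{lm-random}, \eqref{estimo_p} and the uniform $L^2$ bound on $\chi_1$ from Lemma \ref{l_t_deriv}). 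One small correction: positive definiteness of $M=\int_{\mathbb R^d}z\otimes z\,a(z)\,dz$ is \emph{not} an explicit standing assumption in \eqref{M1}, but it is automatic from it --- if $\zeta^{\top}M\zeta=0$ for some $\zeta\neq0$, then $a$ would vanish a.e.\ outside the null hyperplane $\{z:\zeta\cdot z=0\}$, contradicting $\int_{\mathbb R^d}a(z)\,dz=1$.
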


This statement can be proved in the same way as Proposition 5.1 in \cite{AA}. Periodicity of  $\theta(s)$ follows from representation \eqref{II-6}.

Consequently, the matrix
\begin{equation}\label{Theta}
\Theta = \int_0^1 \theta (s) ds
\end{equation}
is also positive definite. 

\subsection{Partial homogenization}

Let $\rho^\eps$ be a solution of the following problem
\begin{equation}\label{apriori-1}
\partial_t  \rho^\eps(x,t) = \theta \Big(\frac{t}{\varepsilon^\alpha} \Big) :  \nabla \nabla \rho^\eps (x, t), \quad \rho^\eps(x,0) = u_0 \in {\cal S}(\mathbb{R}^d).
\end{equation}
Differentiating this equation in spatial variables, considering the fact that $\theta(s)$ does not depend on $x$ and using
the Aronson estimates, see \cite{Arons},
one can easily check that  $\rho^\eps(x,t) \in C^1((0,T), {\cal S}(\mathbb{R}^d))$ for any $T$.
Moreover, the estimates of the corresponding seminorms of $\rho^\eps(x,t)$ and $\partial_t \rho^\eps(x,t)$ are uniform
in $\eps$ and $t\in[0,T]$.
  We then substitute $\rho^\eps$ for $u$ in \eqref{w_eps} and define an ansatz $w^\varepsilon$ by formula \eqref{w_eps}
 with $x^\eps$ given by \eqref{G} and  $\chi_j(\xi,s)$,  $\varkappa(\xi,s)$ being periodic
 solutions of the above auxiliary equations. This formula reads
 \begin{equation}\label{w_rho_eps}
\begin{array}{rl}
\displaystyle
w^{\varepsilon}(x,t)  = &  \displaystyle
\rho^\eps ( x^\varepsilon\!,  t )
+ \Big[ \sum_{j=1}^{k+1}\varepsilon^{\gamma_j} \chi_j \Big(\frac{x}{\varepsilon}, \frac{t}{\varepsilon^\alpha}\Big)
 \Big]\!\cdot\! \nabla \rho^\eps  ( x^\varepsilon\!,  t )
\\[3mm]
\displaystyle
& \displaystyle
+ \varepsilon^2 \varkappa \Big(\frac{x}{\varepsilon}, \frac{t}{\varepsilon^\alpha}\Big):\!\nabla \nabla \rho^\eps  ( x^\varepsilon\!,t ),
\end{array}
\end{equation}

  It follows from \eqref{II-9} that
\begin{equation}\label{w_eps_bis}
\begin{array}{l}
\displaystyle
\partial_t w^{\varepsilon}(x,t) -  L^{\varepsilon} w^{\varepsilon}(x,t)
\\[3mm] \displaystyle
=
\partial_t \rho^\eps(x^\varepsilon,t) -
 \theta \Big(\frac{t}{\varepsilon^2} \Big) : \nabla \nabla \rho^\eps(x^\varepsilon,t) \ + \ \phi^\varepsilon(x,t) =
\phi^\varepsilon(x,t),
\\[3mm] \displaystyle
 w^{\varepsilon}(x,0) = u_0(x) + \psi^\varepsilon(x),
 \end{array}
\end{equation}
where $x^\eps = x - b^\varepsilon(t)$.
The remainder term $\phi^\varepsilon(x,t)=\phi_1^\eps(x,t)+\phi_2^\varepsilon(x,t)$
is defined in \eqref{restphi-1} and \eqref{14}, and the function $\psi^\eps$ is given by
$$
\begin{array}{l}
 \displaystyle
\psi^\varepsilon (x) = \Big(\varepsilon \chi_1 (\frac{x}{\varepsilon},0)+ \eps^{\gamma_2}\chi_2(\frac{x}{\varepsilon},0) + \ldots +  \eps^{\gamma_{k+1}}\chi_{k+1}(\frac{x}{\varepsilon},0) \Big) \cdot \nabla u_0(x)
\\[3mm] \displaystyle
+ \varepsilon^2 \varkappa (\frac{x}{\varepsilon},0) : \nabla \nabla u_0 (x),
\end{array}
$$
$k = \big[ \frac{1}{2-\alpha} \big], \ \gamma_j = 1+ (j-1)(2-\alpha), \ j=1, \ldots,k+1$.

Consequently, the difference  $v^\varepsilon = w^\varepsilon - u^\varepsilon$, where $u^\varepsilon$ is the solution of \eqref{th-2},
satisfies the following problem:
\begin{equation}\label{v_eps_bis}
\partial_t v^{\varepsilon}(x,t) -  L^{\varepsilon} v^{\varepsilon}(x,t) = \phi^\varepsilon(x,t), \quad v^{\varepsilon}(x,0) =  \psi^\varepsilon(x).
\end{equation}
It is straightforward to check that, for any $u_0\in \mathcal{S}(\mathbb R^d)$,
\begin{equation}\label{small_ini}
  \|\psi^\eps\|_{L^2(\mathbb R^d)}\ \to\ 0,\quad\hbox{as }\eps\to0.
\end{equation}
In the following statement we provide estimates for the function $\phi^\eps(x,t)$.
\begin{lemma}\label{phi} Let $u \in C^{1}\big( (0,T), {\cal{S}}(\mathbb R^d) \big)$.
Then for the functions $ \phi^\varepsilon_1$ and $ \phi^\varepsilon_2$ given by \eqref{restphi-1} and \eqref{14} we have
\begin{equation}\label{fi}
\| \phi^\varepsilon_1 \|\big._{L^{\infty}( (0,T); L^2 (\mathbb R^d))}  \to  0 \quad \mbox{and} \quad \| \phi^\varepsilon_2 \|\big._{L^{\infty}( (0,T); L^2 (\mathbb R^d) )}  \to  0, \  \mbox{as} \; \; \varepsilon \to 0.
\end{equation}
\end{lemma}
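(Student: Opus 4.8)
The plan is to treat both $\phi_1^\eps$ and $\phi_2^\eps$ as finite sums of terms of the schematic form $\eps^{\beta}\, g\big(\tfrac{x}{\eps}-z,\tfrac{t}{\eps^\alpha}\big)\, D u(x^\eps,t)$, where $g$ is one of the correctors $\chi_j,\varkappa$ or one of the time derivatives $\partial_s\chi_{k+1},\partial_s\varkappa$, and $Du$ is a spatial or mixed derivative of $u$. The workhorse will be the elementary periodic-times-Schwartz bound: for $g\in L^2(\mathbb T^d)$ and $h\in\mathcal S(\mathbb R^d)$ one has $\big\|g\big(\tfrac{\cdot}{\eps}-z\big)\,h\big\|_{L^2(\mathbb R^d)}\le C\,\|g\|_{L^2(\mathbb T^d)}\,\|h\|_{*}$ uniformly in $\eps\in(0,1]$ and in the shift $z\in\mathbb R^d$, where $\|h\|_*$ is a fixed Schwartz seminorm; this follows by summing over the $\eps$-cells and using the decay of $h$, the shift $z$ being harmless since it does not change the cell average of $|g|^2$. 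Combined with the bound $\mu\le\mu_+$ from \eqref{lm-random}, this reduces every term to the product of an explicit power $\eps^\beta$ and the $L^2(\mathbb T^d)$-norm of a corrector, which is finite thanks to Lemma~\ref{l_t_deriv} and its analogues for $\chi_2,\dots,\chi_{k+1},\varkappa$; this is exactly where the regularity \eqref{k_reg_mu} of $\mu$ in $s$ is used, so that $\partial_s\chi_{k+1}\in C^0$ and $\partial_s\varkappa$ remain bounded in $L^\infty(\mathbb T^1;L^2(\mathbb T^d))$.

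For $\phi_1^\eps$ as given in \eqref{restphi-1} I would simply check that every exponent is strictly positive. Recalling that $\tfrac{d b^\eps}{dt}=O(\eps^{-1})$, dominated by the leading term \eqref{b1} while the remaining pieces $\tfrac{d b_j^\eps}{dt}=O(\eps^{\gamma_j-\alpha})$ are of lower order, the five groups carry the powers $\gamma_{k+1}-\alpha$, $\gamma_2-1=2-\alpha$, $\gamma_1=1$, $2-\alpha$ and $1$, respectively. Since the iteration in Section~\ref{Proof-1} was stopped precisely when $\gamma_{k+1}-\alpha>0$, and $2-\alpha>0$, all exponents are positive, whence $\|\phi_1^\eps\|_{L^\infty((0,T);L^2)}=O\big(\eps^{\min(\gamma_{k+1}-\alpha,\,2-\alpha)}\big)\to0$.

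For $\phi_2^\eps$ as given in \eqref{14} I would first apply Minkowski's integral inequality to pull the $L^2_x$-norm inside the $z$-integral against $a(z)\,dz$; the resulting weights $|z|$ and $|z|^2$ are integrable against $a$ by the moment condition \eqref{M1} (finiteness of the first moment following from the second by Cauchy--Schwarz). The three groups carrying explicit factors $\eps$, $\eps^{\gamma_j-1}\ (j\ge2,\ \text{hence}\ \ge\eps^{2-\alpha})$ and $\eps$ are then handled exactly as above. The hard part, and the only contribution with no gaining power of $\eps$, is the second-order Taylor remainder $\int_0^1(1-q)\,z\otimes z:\big(\nabla\nabla u(x^\eps-\eps zq,t)-\nabla\nabla u(x^\eps,t)\big)\,dq$: it vanishes only through the continuity of $\nabla\nabla u$, and controlling it in $L^2$ with merely a second moment on $a$ forces a splitting $\int_{\mathbb R^d}=\int_{|z|\le R}+\int_{|z|>R}$. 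On the tail the difference is bounded by $2\|\nabla\nabla u\|_{L^2}$ and $\int_{|z|>R}|z|^2a(z)\,dz\to0$ as $R\to\infty$ uniformly in $\eps$; on the bulk the $L^2$-continuity of translation gives $\|\nabla\nabla u(\cdot-\eps zq)-\nabla\nabla u(\cdot)\|_{L^2}\le\eps|z|q\,\|\nabla\nabla\nabla u\|_{L^2}$, so the $|z|\le R$ part is $O\big(\eps R\int_{|z|\le R}|z|^2a(z)\,dz\big)=O(\eps R)$. Choosing $R$ large first and then $\eps$ small makes this term arbitrarily small.

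Throughout, the constants depend only on finitely many Schwartz seminorms of $u(\cdot,t)$, $\partial_t u(\cdot,t)$ and their spatial derivatives, which are bounded uniformly in $t\in(0,T)$; hence all estimates are uniform in $t$ and deliver the two convergences in \eqref{fi}. I expect the main obstacle to be precisely the Taylor-remainder term in $\phi_2^\eps$, since it is the single piece lacking a gaining power of $\eps$ and must be tamed by the moment-splitting argument rather than by brute scaling.
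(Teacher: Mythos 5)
Your proof is correct and follows essentially the same route as the paper: uniform $L^\infty(\mathbb T^1;L^2(\mathbb T^d))$ bounds on the correctors and their $s$-derivatives from Lemma~\ref{l_t_deriv}, the bound $\big|\frac{d}{dt}b^\eps(t)\big|\le C\eps^{-1}$, and power counting give the same exponent $\min\{\gamma_{k+1}-\alpha,\,2-\alpha,\,1\}$ for $\phi_1^\eps$, while $\phi_2^\eps$ is reduced via Minkowski's inequality, the moment conditions \eqref{M1} and the periodic-times-Schwartz $L^2$ bound. The only difference is one of packaging: the paper disposes of the delicate terms of $\phi_2^\eps$ --- in particular the Taylor remainder carrying no gaining power of $\eps$ --- by citing Proposition~5 of \cite{PZh}, whereas you reprove that estimate directly with the $|z|\le R$ / $|z|>R$ truncation combined with $L^2$-continuity of translations, which is precisely the mechanism behind the cited result, so your argument is self-contained but identical in substance.
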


\begin{proof}
It follows from \eqref{G}, \eqref{b1} and \eqref{b1-t} that $\frac{d b^\eps (t)}{dt}\le C \eps^{-1}$ for all $0<\alpha<2$. Also, according to Lemma \ref{l_t_deriv},  all the components of $\chi_j(\xi, s) , \, \partial_s \chi_j(\xi, s), \; j=1, \ldots, k+1,$ and $\varkappa(\xi, s) , \, \partial_s \varkappa(\xi, s)$ are elements of $L^\infty((0,+\infty);L^2(\mathbb T^d))$.
Since $\rho^\eps$ and $\partial_t \rho^\eps$  are Schwartz class functions of $x$, this yields
$$
\| \phi^\varepsilon_1\|_{L^\infty((0,T);L^2(\mathbb R^d))} \le C_1 \eps^{\delta_1}, \quad \mbox{with} \; \; \delta_1 = \min\{\gamma_{k+1}-\alpha, \, 2-\alpha, \, 1  \}.
$$
As was shown in \cite[Proposition 5]{PZh}, the first, the second and the last terms on the right-hand side of \eqref{14} converge to zero in $L^\infty((0,T);L^2(\mathbb R^d))$,
as $\eps\to0$.
The smallness of the third term  in $L^\infty((0,T);L^2(\mathbb R^d))$  follows from Lemma \ref{l_t_deriv} and the fact that $\rho^\eps(t,\cdot)$ is a Schwartz class function.
\end{proof}

\subsection{A priori estimates}\label{sec_apriori}
Now our goal is to estimate the function $v^{\varepsilon}(\cdot)$.
The estimates for $v^{\varepsilon}(\cdot)$ rely on the following statement.
\begin{proposition}\label{prop_apriori}
A solution of the problem
\begin{equation}\label{au_apriori}
\partial_t \Xi^{\varepsilon}(x,t) -  L^{\varepsilon} \Xi^{\varepsilon}(x,t) = f(x,t), \quad \Xi^{\varepsilon}(x,0) =  g_0(x),
\end{equation}
with $f\in L^2((0,T);L^2(\mathbb R^d))$ and $g_0\in L^2(\mathbb R^d)$ admits the following upper bound
\begin{equation}\label{xixi_est}
\|\Xi^\eps\|_{L^\infty((0,T);L^2(\mathbb R^d))}\leqslant C_1(T)\|f\|_{L^2((0,T);L^2(\mathbb R^d))}
+C_2\|g_0\|_{L^2(\mathbb R^d)}
\end{equation}
with constants $C_1(T)$ and $C_2$ that do not depend on $\eps$.
\end{proposition}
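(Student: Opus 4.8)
The plan is to obtain \eqref{xixi_est} from a Gronwall inequality for a \emph{weighted} $L^2$ energy, the weight being chosen so as to neutralise the strongly non-symmetric (large drift) part of $L^\eps(t)$. Write the kernel as $K_\eps(x,y)=\eps^{-d-2}a\big(\tfrac{x-y}\eps\big)\mu\big(\tfrac x\eps,\tfrac y\eps,\tfrac t{\eps^\alpha}\big)$, so that $L^\eps(t)u(x)=\int K_\eps(x,y)(u(y)-u(x))\,dy$. Since $a\ge0$ and $L^\eps(t)\mathtt1=0$, the operator is a Markov generator with a spatially oscillating drift of order $\eps^{-1}$; consequently the plain $L^2$ norm is not almost decreasing, as testing \eqref{au_apriori} with $\Xi^\eps$ produces a term of order $\eps^{-2}$ coming from the antisymmetric part of $K_\eps$. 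First I would therefore replace $\|\Xi^\eps(\cdot,t)\|_{L^2}^2$ by the equivalent quantity
\[
E^\eps(t)=\int_{\mathbb R^d}\big(\Xi^\eps(x,t)\big)^2\,p\Big(\tfrac x\eps,\tfrac t{\eps^\alpha}\Big)\,dx ,
\]
with $p$ the adjoint ground state of \eqref{aux_p_adj}; by \eqref{estimo_p} one has $\pi_1\|\Xi^\eps\|_{L^2}^2\le E^\eps\le\pi_2\|\Xi^\eps\|_{L^2}^2$ uniformly in $\eps,t$, so it suffices to bound $E^\eps$.

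The key algebraic fact is that $\rho^\eps(x,t):=p(x/\eps,t/\eps^\alpha)$ is, for each frozen $t$, the invariant density of $L^\eps(t)$: the equation $A^\star(s)p(\cdot,s)=0$ in \eqref{aux_p_adj} is precisely the stationarity condition $\int_{\mathbb R^d}K_\eps(y,x)\rho^\eps(y)\,dy=\rho^\eps(x)\int_{\mathbb R^d}K_\eps(x,y)\,dy$. Combining this with the symmetrisation $x\leftrightarrow y$ yields the dissipativity identity
\[
2\int_{\mathbb R^d}\big(L^\eps(t)\Xi^\eps\big)\Xi^\eps\,\rho^\eps\,dx=-\int\!\!\int K^{\mathrm{sym}}_\eps(x,y)\big(\Xi^\eps(y)-\Xi^\eps(x)\big)^2\,dy\,dx=:-D^\eps[\Xi^\eps]\le0 ,
\]
where $K^{\mathrm{sym}}_\eps(x,y)=\tfrac12\big(K_\eps(x,y)\rho^\eps(x)+K_\eps(y,x)\rho^\eps(y)\big)$. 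Thus the whole $\eps^{-2}$ and $\eps^{-1}$ content of the drift is absorbed into a nonpositive dissipation, and the source is harmless, $2\int f\Xi^\eps\rho^\eps\le C\|f(\cdot,t)\|_{L^2}\,(E^\eps(t))^{1/2}$.

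The main obstacle is the time dependence of the weight. Differentiating $E^\eps$ gives
\[
\tfrac{d}{dt}E^\eps(t)=-D^\eps[\Xi^\eps]+2\int f\Xi^\eps\rho^\eps\,dx+\eps^{-\alpha}\int_{\mathbb R^d}\big(\Xi^\eps\big)^2\,\partial_s p\Big(\tfrac x\eps,\tfrac t{\eps^\alpha}\Big)\,dx ,
\]
and the last term carries the dangerous factor $\eps^{-\alpha}$. To tame it I would use two facts: $\int_{\mathbb T^d}\partial_s p(\xi,s)\,d\xi=0$ (since $\int_{\mathbb T^d}p\,d\xi\equiv1$), so $\partial_s p$ lies in the range of $A^\star(s)$ by the Fredholm alternative of \cite{AA}; and $\int\Gamma^\eps[\Xi^\eps]\le C\,D^\eps[\Xi^\eps]$, where $\Gamma^\eps[\Xi](x)=\int K_\eps(x,y)(\Xi(y)-\Xi(x))^2\,dy$, because $K^{\mathrm{sym}}_\eps(x,y)\ge\pi_1\cdot\tfrac12\big(K_\eps(x,y)+K_\eps(y,x)\big)$. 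Solving the cell equations $A^\star(s)q_1=-\partial_s p$, $A^\star(s)q_{j+1}=-\partial_s q_j$ (each right-hand side has zero cell-average, hence is solvable) and replacing the weight by $\rho^\eps+\sum_{j\ge1}\eps^{j(2-\alpha)}q_j(x/\eps,t/\eps^\alpha)$, one checks — using the exact relation $(L^\eps(t))^\star[\phi(\cdot/\eps)]=\eps^{-2}(A^\star(s)\phi)(\cdot/\eps)$ and the carr\'e du champ identity $2\Xi L^\eps\Xi=L^\eps(\Xi^2)-\Gamma^\eps[\Xi]$ — that each corrector cancels the previous $\partial_s$-term exactly, at the price of a residual $\eps^{j(2-\alpha)}\int q_j\,\Gamma^\eps[\Xi^\eps]$ which, by the bound above and the positive power $j(2-\alpha)>0$, is $\le\tfrac14 D^\eps[\Xi^\eps]$ for small $\eps$ and is absorbed. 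The chain terminates once the residual $\partial_s$-term carries a nonnegative power of $\eps$; the number of steps is controlled by $\alpha$ and is matched by the regularity $\mu\in C^{k+1}$ of \eqref{k_reg_mu}, which is exactly what makes all $q_j$ and their $s$-derivatives well defined and bounded.

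With these correctors the modified weighted energy $\widetilde E^\eps$ satisfies $\tfrac{d}{dt}\widetilde E^\eps\le C\,\widetilde E^\eps+C\|f(\cdot,t)\|_{L^2}\,(\widetilde E^\eps)^{1/2}$ with $C$ independent of $\eps$. Gronwall's lemma then gives $\widetilde E^\eps(t)\le e^{CT}\big(\widetilde E^\eps(0)+C\int_0^t\|f\|_{L^2}^2\big)$, and reverting to the $L^2$ norm via the uniform equivalence $\pi_1\|\cdot\|_{L^2}^2\le\widetilde E^\eps\le\pi_2\|\cdot\|_{L^2}^2$ (valid for small $\eps$, the corrections being $O(\eps^{2-\alpha})$) produces \eqref{xixi_est} with $C_2=Ce^{CT/2}$ and $C_1(T)=C\sqrt T\,e^{CT/2}$, both independent of $\eps$. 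I expect the delicate point to be precisely the bookkeeping of this corrector hierarchy: one must verify that every newly generated term is either a controlled fraction of the dissipation $D^\eps$ or already carries a nonnegative power of $\eps$, and that the finite chain closes under the available regularity of $\mu$.
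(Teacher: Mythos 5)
Your overall strategy --- a weighted $L^2$ energy inequality closed by Gronwall's lemma --- is the same as the paper's, and your dissipativity identity for the frozen-time weight $p(x/\eps,t/\eps^\alpha)$ is correct. But the handling of the time dependence of the weight, which you rightly identify as the crux, is where the paper takes a different and cleaner route: instead of expanding the weight, it uses the \emph{exact} solution $q^\eps$ of the backward adjoint problem $-\partial_s q=A^\star(\eps^{2-\alpha}s)q$, $q(\xi,2T/\eps^2)=\mathtt{1}$ (equation \eqref{aux_qq}), so that $q^\eps(x/\eps,t/\eps^2)$ solves $-\partial_t v=(L^\eps(t))^\star v$ identically. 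Then the dangerous term $\int(\Xi^\eps)^2\big(\partial_t+(L^\eps)^\star\big)W\,dx$ vanishes exactly, no corrector hierarchy is needed, no $s$-derivatives of $\mu$ are consumed, and the only nontrivial input is the uniform two-sided bound $0<\pi_3\le q^\eps\le\pi_4$ of Lemma \ref{l_check_q_bound} (borrowed from \cite{PZ_jmpa25}).

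Your perturbative hierarchy, as written, has two genuine gaps. First, the regularity bookkeeping fails: after $m$ weight correctors the residual is $\eps^{m(2-\alpha)-\alpha}\partial_s q_m$, so you must take $m^*=\lceil \alpha/(2-\alpha)\rceil$ correctors \emph{and} you need $\partial_s q_{m^*}$ to be controlled, i.e.\ roughly $m^*+1$ derivatives of $\mu$ in $s$ (each $q_j$ costs one derivative, the final residual costs one more). Since $\alpha/(2-\alpha)=2/(2-\alpha)-1>1/(2-\alpha)$ for $\alpha>1$, this exceeds the assumed $C^{k+1}$ with $k=\big[\frac1{2-\alpha}\big]$: for $\alpha=3/2$ you need $q_1,q_2,q_3$ and $\partial_s q_3$, i.e.\ four $s$-derivatives, while \eqref{k_reg_mu} provides three; for $0<\alpha<1$ you need two while the assumption gives one. (The paper's count $k+1$ is tailored to the ansatz correctors $\chi_j$, whose residual chain starts at order $\eps^{1-\alpha}$; your weight chain starts at $\eps^{-\alpha}$ and moreover its first right-hand side is already $\partial_s p$, so it is strictly more demanding.) Your claim that the chain ``is matched by the regularity $\mu\in C^{k+1}$'' is therefore false except at $\alpha=1$. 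Second, every step of your scheme --- the equivalence of $\widetilde E^\eps$ with $\|\cdot\|^2_{L^2}$, the absorption of $\eps^{j(2-\alpha)}\int q_j\,\Gamma^\eps[\Xi^\eps]$ into the dissipation, and the Gronwall bound on the residual term --- requires \emph{pointwise} bounds $\|q_j\|_{L^\infty(\mathbb T^d)}<\infty$, $\|\partial_s q_{m^*}\|_{L^\infty(\mathbb T^d)}<\infty$. The regularity actually available (Lemma \ref{l_t_deriv}) is only $C^{k+1}(\mathbb T^1;L^2(\mathbb T^d))$, i.e.\ $L^2$ in the spatial variable, and the two-sided bound \eqref{estimo_p} is established for $p$ only; since $a$ is merely in $L^1(\mathbb R^d)$, a solution of $A^\star(s)q=g$ with $g\in L^2(\mathbb T^d)$ need not be bounded, so these $L^\infty$ bounds would require a separate argument. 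Both difficulties are precisely what the paper's choice of weight circumvents.
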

\begin{proof}
In order to show that estimate \eqref{xixi_est} holds we introduce a weighted $L^2(\mathbb R^d)$ space.
Consider an auxiliary Cauchy problem
\begin{equation}\label{aux_qq}
\textstyle
-\partial_s q(\xi,s)= \!A^\star(\eps^{2-\alpha}s)q(\xi,s),\quad q\big(\xi,\frac{2T}{\eps^2}\big)=1,
\quad(\xi,s)\in\mathbb T^d\!\times\!\big(\!-\infty, \frac{2T}{\eps^2}\big).
\end{equation}
We denote a solution of this problem by $q^\eps(\xi,s)$.
Notice that the function 
$q^\eps\big(\frac x\eps, \frac t{\eps^2}\big)$ is $\eps$-periodic in the variables $x_1,\ldots, x_d$ and solves the equation
\begin{equation}\label{eq_check_q}
- \partial_t  v=L^{\eps} (t)^\star v
\end{equation}
with
\begin{equation}\label{eq_check_qqqq}
\begin{array}{c}
\displaystyle
L^{\eps}(t)^\star v(x,t)=
 \frac1{\eps^{d+2}}\int_{\mathbb R^d}a\Big(\frac{y-x}\eps\Big)
\mu\Big(\frac{y}\eps,\frac{x}\eps,\frac{t}{\eps^\alpha}\Big)v(y,t)dy\\[5mm]
\displaystyle
-\frac1{\eps^{d+2}}\bigg(\int_{\mathbb R^d}a\Big(\frac{x-y}\eps\Big)
\mu\Big(\frac{x}\eps,\frac{y}\eps,\frac{t}{\eps^\alpha}\Big)dy\bigg) v(x,t)
\end{array}
\end{equation}
\begin{lemma}\label{l_check_q_bound}
  The function $q^\eps$  satisfies the estimate
  \begin{equation}\label{more_est}
  0<\pi_3\leqslant q^\eps(\xi,s)\leqslant \pi_4<\infty
  \end{equation}
  with the constants $\pi_3$ and $\pi_4$ that do not depend on $\eps$.
\end{lemma}
\begin{proof}
  The proof of this Lemma follows the line of the proof of Lemma 3.4 in \cite{PZ_jmpa25}.
\end{proof}

Multiplying the equation in \eqref{au_apriori} by $\big( q^\eps\big(\frac x\eps,\frac t{\eps^2}) \Xi^\eps(x,t)\big)$ and integrating the resulting relation over
$\mathbb R^d\times(0,t)$, $t\leqslant T$, we obtain
\begin{equation}\label{identity1}
\begin{array}{c}
\displaystyle
\frac12\!\int\limits_0^t\!\!\int\limits_{\mathbb R^d}\!{q}^\eps\big(\frac x\eps,\frac s{\eps^2}\big)\partial_s((\Xi^\eps(x,s))^2) dxds
-\!\!\int\limits_0^t\!\!\int\limits_{\mathbb R^d}\!{q}^\eps\big(\frac x\eps,\frac s{\eps^2}\big)\Xi(x,s)L^{\varepsilon} \Xi^{\varepsilon}(x,s) dxds\\[3mm]
\displaystyle
=\int\limits_0^t\int\limits_{\mathbb R^d}{q}^\eps\big(\frac x\eps,\frac s{\eps^2}\big)f(x,s)\Xi^\eps(x,s)dxds.
\end{array}
\end{equation}
Considering equation \eqref{eq_check_q}
and making straightforward rearrangements, we arrive at the following relation, see also the proof of  \cite[Proposition 6.1]{AA}:
$$
\begin{array}{c}
\displaystyle
\int\limits_{\mathbb R^d}{q}^\eps \big(\frac x\eps,\frac t{\eps^2}\big)(\Xi^\eps(x,t))^2 dx\\[3mm]
\displaystyle
+\frac 1{\eps^{d+2}}\int\limits_0^t\int\limits_{\mathbb R^d}a\Big(\frac{x-y}\eps\Big)
\mu\Big(\frac x\eps,\frac y\eps,\frac s{\eps^\alpha}\Big){q}^\eps \big(\frac x\eps,\frac s{\eps^2}\big)
(\Xi^\eps(x,s)- \Xi^\eps(y,s))^2 dxds\\[3mm]
\displaystyle
=\int\limits_{\mathbb R^d}{q}^\eps(\frac x\eps,0)(g_0(x))^2 dx
+2\int\limits_0^t\int\limits_{\mathbb R^d}{q}^\eps \big(\frac x\eps,\frac s{\eps^2}\big)\Xi^\eps(x,s)dxds.
\end{array}
$$
Sinse $q^\eps$ satisfies estimate \eqref{more_est}, the desired upper bound can be derived from the latter relation by the Gronwall theorem.
\end{proof}

\subsection{Convergence results}\label{ss_conv_fin}

We begin this section by estimating the difference $v^\eps=w^\eps-u^\eps$.
According to  \eqref{small_ini} and Lemma \ref{phi}, both
$\|\phi^\varepsilon \|_{L^{\infty}((0,T);L^2(\mathbb R^d))}$ and $\| \psi^\varepsilon \|_{L^2(\mathbb{R}^d)}$  tend to zero as $\eps\to0$.   Applying Proposition \ref{prop_apriori} to problem
\eqref{v_eps_bis} we conclude that
\begin{equation}\label{apriori-2}
\|v^\eps\|_{L^{\infty}((0,T);L^2(\mathbb R^d))} =
 \| w^\varepsilon - u^\varepsilon\|_{L^{\infty}((0,T);L^2(\mathbb R^d))}  \to 0 \quad \mbox{ as } \;  \eps \to 0.
\end{equation}
On the other hand, considering the structure of the ansatz $w^\eps$ in \eqref{w_rho_eps} one can easily show that
\begin{equation}\label{apriori-3}
 \| w^\varepsilon(x,t) - \rho^\varepsilon(x^\eps, t)\|_{L^{\infty}((0,T);L^2(\mathbb R^d))}  \to 0 \quad \mbox{ as } \;  \eps \to 0.
\end{equation}
Combining the last two limit relations yields
\begin{equation}\label{apriori-3biss}
 \| u^\varepsilon(x,t) - \rho^\varepsilon(x^\eps, t)\|_{L^{\infty}((0,T);L^2(\mathbb R^d))}  \to 0 \quad \mbox{ as } \;  \eps \to 0.
\end{equation}
Let $u^0$ be a solution to problem \eqref{intr_eff} with $\Theta$ defined in \eqref{Theta}.
Our next goal is to show that the solution $\rho^\eps$ of problem \eqref{apriori-1} converges to $u^0$
as $\eps\to0$.
\begin{lemma}\label{l_closedness}
  For any $u_0\in \mathcal{S}(\mathbb R^d)$ the family $\{\rho^\eps(x,t)\}$ converges to $u^0(x,t)$ in $C((0,T);L^2(\mathbb R^d))$ as $\eps\to0$.
\end{lemma}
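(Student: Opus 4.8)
The decisive simplification is that the coefficient $\theta(t/\eps^\alpha)$ in \eqref{apriori-1} does not depend on the spatial variable $x$, so the problem is diagonalized by the Fourier transform in $x$. First I would pass to $\hat\rho^\eps(k,t)$, the Fourier transform of $\rho^\eps$ in $x$. Since $\theta$ is $x$-independent, \eqref{apriori-1} reduces, for each fixed $k\in\X$, to the scalar linear ODE $\partial_t\hat\rho^\eps(k,t)=-\big(\theta(t/\eps^\alpha)k\cdot k\big)\hat\rho^\eps(k,t)$ with $\hat\rho^\eps(k,0)=\hat u_0(k)$, whose explicit solution is $\hat\rho^\eps(k,t)=\exp\!\big(-\int_0^t\theta(s/\eps^\alpha)k\cdot k\,ds\big)\hat u_0(k)$. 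In the same way the homogenized problem \eqref{intr_eff} yields $\hat u^0(k,t)=\exp(-t\,\Theta k\cdot k)\hat u_0(k)$, with $\Theta=\int_0^1\theta(s)\,ds$ as in \eqref{Theta}. Only the symmetric parts of $\theta$ and $\Theta$ enter these symbols, and by Lemma \ref{l_posidef} we have $\theta(s)k\cdot k\ge\lambda|k|^2$, so both exponents are nonnegative.

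The second ingredient is an averaging estimate for the oscillating coefficient. With the substitution $\sigma=s/\eps^\alpha$ and the $1$-periodicity of $\theta$, I would split $\int_0^{t/\eps^\alpha}\theta(\sigma)\,d\sigma$ into a sum over complete periods plus a remainder over the fractional part of $t/\eps^\alpha$. Using only the boundedness of $\theta$ from Lemma \ref{l_posidef}, this gives $\big|\int_0^t\theta(s/\eps^\alpha)\,ds-t\,\Theta\big|\le C\eps^\alpha$ uniformly in $t\in[0,T]$. Consequently, for the nonnegative scalar exponents $a:=\int_0^t\theta(s/\eps^\alpha)k\cdot k\,ds$ and $b:=t\,\Theta k\cdot k$ we obtain $|a-b|\le C\eps^\alpha|k|^2$, again uniformly in $t\in[0,T]$.

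Finally I would combine these facts through Plancherel's identity. Since $x\mapsto e^{-x}$ is $1$-Lipschitz on $[0,\infty)$ and $a,b\ge0$, we have the pointwise bound $\big|e^{-a}-e^{-b}\big|\le|a-b|\le C\eps^\alpha|k|^2$, whence
\[
\|\rho^\eps(\cdot,t)-u^0(\cdot,t)\|_{L^2(\X)}^2=\int_{\X}\big|e^{-a}-e^{-b}\big|^2|\hat u_0(k)|^2\,dk\le C\eps^{2\alpha}\int_{\X}|k|^4|\hat u_0(k)|^2\,dk.
\]
Because $u_0\in\mathcal S(\X)$, the last integral is finite, so the right-hand side is bounded by $C\eps^{2\alpha}$ uniformly in $t\in[0,T]$. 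Taking the supremum over $t$ yields convergence in $C([0,T];L^2(\X))$, and in particular in $C((0,T);L^2(\X))$, as claimed.

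\textbf{Main obstacle.} Since the coefficient is $x$-independent, there is no genuine elliptic homogenization difficulty here: the whole argument collapses to the time-averaging of a periodic matrix-valued function. The only points requiring care are ensuring the averaging estimate is uniform in $t$ down to $t=0$ (handled by the periodicity splitting) and controlling the factor $|k|^2$ produced by differentiating the exponent in $k$; the latter is absorbed by the Schwartz decay of $\hat u_0$, which is precisely why the hypothesis $u_0\in\mathcal S(\X)$ is invoked.
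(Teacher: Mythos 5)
Your proof is correct, but it takes a genuinely different route from the paper. You exploit the fact that the coefficient $\theta(t/\eps^\alpha)$ is $x$-independent to diagonalize \eqref{apriori-1} by the Fourier transform, solve the resulting scalar ODEs explicitly, and reduce everything to the elementary averaging bound $\big|\int_0^t\theta(s/\eps^\alpha)\,ds-t\,\Theta\big|\le C\eps^\alpha$, combined with the $1$-Lipschitz property of $e^{-x}$ on $[0,\infty)$ and Plancherel. The paper instead argues by compactness: it derives uniform bounds $\|\rho^\eps\|_{L^\infty((0,T);H^1)}+\|\partial_t\rho^\eps\|_{L^\infty((0,T);H^{1})}\le C\|u_0\|_{H^3}$, uses the compact embedding of $W^{1,\infty}((0,T);H^1(B_R))$ into $C((0,T);L^2(B_R))$ together with Aronson's Gaussian upper bound to get tightness at spatial infinity, extracts a convergent subsequence, and identifies the limit by passing to the limit in the integral identity, where the weak-$*$ convergence $\theta(\cdot/\eps^\alpha)\rightharpoonup\Theta$ enters. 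Your argument buys more: it is shorter, avoids Aronson estimates and any compactness machinery, and yields an explicit rate $\|\rho^\eps-u^0\|_{L^\infty((0,T);L^2(\X))}\le C\eps^\alpha$ rather than bare convergence; you also correctly note that only the symmetric part of $\theta$ enters the symbol, and that the Schwartz decay of $\hat u_0$ absorbs the factor $|k|^2$. What the paper's approach buys in exchange is robustness: the compactness-plus-weak-formulation scheme would survive perturbations in which the coefficient also depends on $x$ (where no Fourier diagonalization is available), which is the standard setting this lemma is modeled on. Both proofs are complete for the statement at hand.
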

\begin{proof}
Differentiating equation \eqref{apriori-1} in spatial variables
it is straightforward to obtain the following estimates for $\rho^\eps$:
\begin{equation}\label{ee_compaa}
\|\rho^\eps\|_{L^\infty((0,T);H^1(\mathbb R^d))}+\|\partial_t \rho^\eps\|_{L^\infty((0,T);H^{1}(\mathbb R^d))}
\leqslant C\|u_0\|_{H^3(\mathbb R^d)},
\end{equation}
here the constant $C$ does not depend on $\eps$. Denote $B_R=\{x\in\mathbb R^d\,:\,|x|\leqslant R\}$.
{
Since for any $R>0$ the space $W^{1,\infty}((0,T);H^1(B_R))$ is compactly imbedded to $C((0,T);L^2(B_R)$, estimate \eqref{ee_compaa} implies that for any $R>0$
the family $\{\rho^\eps\}$ is compact in $C((0,T);L^2(B_R))$. According to  \cite{Arons} the fundamental solution of Cauchy problem \eqref{apriori-1}
satisfies the following Aronson upper bound
$$
Q_\eps(x,t)\leqslant C_1t^{-\frac d2}\exp\Big(-c_2 \frac{|x|^2}t\Big).
$$
with the constants $C_1$ and $c_2$ that only depend on the ellipticity constants of the matrix $\theta$.
Therefore, $\|\rho^\eps\|_{C((0,T);L^2(\mathbb R^d\setminus B_R))}$ tends to zero as $R\to\infty$ uniformly in $\eps$.
Combining this estimate with the above compactness result
we conclude that $\{\rho^\eps\}$ is compact in $C((0,T);L^2(\mathbb R^d))$. }
For a converging subsequence of $\{\rho^\eps\}$ denote its limit by $\rho^0$. Let $\psi(x,t)$ be a
$C^\infty((0,T);C_0^\infty(\mathbb R^d))$ function such that $\psi(x,T)=0$. The integral identity of problem \eqref{apriori-1} reads
$$
-\int_0^T\int_{\mathbb R^d}\rho^\eps \partial_t\psi\,dxdt=\int_{\mathbb R^d} u_0 \psi(\cdot,0)\,dx+
\int_0^T\int_{\mathbb R^d}\rho^\eps \theta\Big(\frac t{\eps^\alpha}\Big) : \nabla\nabla\psi\,dxdt
$$
Passing to the limit as $\eps\to0$
in this integral identity 
we obtain the integral identity
of problem  \eqref{intr_eff}. Therefore, $\rho^0=u^0$ and the whole family $\{\rho^\eps\}$ converges to $u^0$:
 \begin{equation}\label{apriori-4}
 \| u^0 - \rho^\varepsilon\|_{L^{\infty}((0,T);L^2(\mathbb R^d))}  \to 0 \quad \mbox{ as } \;  \eps \to 0.
\end{equation}
\end{proof}

Since {
$\|\rho^\eps(x,t)-u^0(x,t)\|_{L^{\infty}((0,T);L^2(\mathbb R^d))}=
\|\rho^\eps(x^\eps,t)-u^0(x^\eps,t)\|_{L^{\infty}((0,T);L^2(\mathbb R^d))}$, then \eqref{apriori-2} - \eqref{apriori-4} yield the convergence
\begin{equation}\label{apriori-5}
 \| u^\eps (x,t) - u^0(x^\varepsilon,t)\|_{L^{\infty}((0,T);L^2(\mathbb R^d))}  \to 0 \quad \mbox{ as } \;  \eps \to 0,
\end{equation}
where $x^\eps$ is defined by formula \eqref{G}.

Thus, we have proved convergence \eqref{th-4-I} for any
$u_0 \in {\cal S}(\mathbb{R}^d)$.

Approximating any $L^2$ initial function by a sequence of ${\cal S}(\mathbb R^d)$ functions and taking into account the a priori estimates obtained in Proposition \ref{prop_apriori}
and similar estimates for the limit problem in \eqref{intr_eff}, we derive the statement of Theorem \ref{MT}.
\hfill$\Box$\\[4mm]


\bigskip\noindent
{\bf Author contributions} \
The authors contributed equally to the manuscript.

\bigskip\noindent
{\bf Funding} \ The work of both authors was partially supported by the project “Pure Mathematics
in Norway” and the UiT Aurora project MASCOT.

\bigskip\noindent
{\bf Data availability}  Data sharing is not applicable to this article as no datasets were generated or analyzed
during the current study.

\subsubsection*{Declarations}

\bigskip\noindent
{\bf Conflict of interest}
The authors declare that they have no conflict of interest regarding the publication of this paper.

\bigskip\noindent
{\bf Ethical approval} Not applicable.


\begin{thebibliography}{10}

\bibitem{Arons} D.G. Aronson,  Bounds for the fundamental solution of a parabolic equation. {\it Bull. Am. Math. Soc.}, {\bf 73}(1967), 890–-896.


\bibitem{CKP} F. Campillo, M. Kleptsyna and A. Piatnitski, Homogenization of  random parabolic operators with large potential, {\it Stochastic Processes Appl.}, {\bf 93}(2001), 57--85.

\bibitem{DP}  P. Donato, A. Piatnitski,  Averaging of nonstationary parabolic operators with large lower order terms. {\sl Multi scale problems and asymptotic analysis}, 153--165, GAKUTO Internat. Ser. Math. Sci. Appl., {\bf 24}, Gakkotosho, Tokyo, 2006.




\bibitem{G} J. Garnier, Homogenization in periodic and time-dependent potential. {\it SIAM J. Appl. Math.},
 {\bf 57}(1997), 95--111.


\bibitem{Halmos} P. R. Halmos and V. Sh. Sunder, {\it Bounded integral operators on $L^2$ spaces},   Springer-Verlag, Berlin, 1978.


\bibitem{KP} M.L. Kleptsyna, A.L. Piatnitski, Homogenization of a random non-stationary convection-diffusion problem, {\it Russian Math. Surveys},  {\bf 57}(4) (2002),  729--751.



\bibitem{Laksh} V. Lakshmikantham, S. Leela, {\sl Differential and Integral Inequalities. Theory and Applications},
Academic Press, New York and London, 1969.


\bibitem{AA}  A. Piatnitski, E. Zhizhina, Homogenization of  biased convolution type operators,
{\it Asymptotic Analysis}, {\bf 115}(3-4) (2019),  241--262.







\bibitem{PZh}   A. Piatnitski, E. Zhizhina, Periodic homogenization of non-local operators with a convolution type kernel,
{\it SIAM J. Math. Anal.}, {\bf 49}(1)  (2017),  64--81.

\bibitem{PZh2023} A. Piatnitski, E. Zhizhina, Homogenization of non-autonomous operators of convolution type in
  periodic media, {\it Markov Processes and Related Fields},  {\bf 29}(2) (2023),  173--188.

\bibitem{PZ_jmpa25} A. Piatnitski, E. Zhizhina,  Homogenization of non-autonomous evolution problems for
convolution type operators in randomly evolving media,
{\it  J. Math. Pures Appl.},  {\bf 194} (2025), 103660.

\bibitem{ZhiPa16}V. V. Zhikov, S. E. Pastukhova, Operator estimates in
homogenization theory, {\it Russian Mathematical Surveys}, {\bf 71}(3), (2016), 417--511.









\end{thebibliography}
\end{document}